\definecolor{purple1}{HTML}{5903AA}
\definecolor{blue1}{HTML}{220D82}
\definecolor{blue2}{HTML}{078EBF}
\definecolor{red1}{HTML}{600909}
\crefname{equation}{eq.}{equations}  
\crefname{section}{sec.}{sections}
\crefname{lemma}{lem.}{lemmata} 
\crefname{proposition}{prop.}{propositions} 
\crefname{remark}{rem.}{remarks}
\crefname{enumi}{step}{steps}
\crefname{theorem}{thm.}{theorems}
\crefname{algorithm}{alg.}{algorithms}
\crefname{figure}{fig.}{figures}
\crefname{definition}{def.}{definitions}
\crefname{observation}{obs.}{observations}
\crefname{corollary}{cor.}{cor.}
\crefname{appendix}{app.}{app.}
\newtheorem{theorem}{Theorem}
\newtheorem{definition}[theorem]{Definition}
\newtheorem{proposition}[theorem]{Proposition}
\newtheorem{corollary}[theorem]{Corollary}
\newtheorem{remark}[theorem]{Remark}
\newtheorem{example}[theorem]{Example}
\def\cW{{Y}\xspace}
\def\cV{{X}\xspace}
\def\cU{\mathcal{U}\xspace}
\def\VV{\mathbb{V}\xspace}
\newcommand{\pp}{\mathbb{P}}
\newcommand{\PP}{\mathbb{P}}
\newcommand{\pdg}{\mathfrak{d}\xspace}
\begin{document}

\title{Segre-Driven Radicality Testing}

\author{
  Martin Helmer%
  \thanks{Mathematical Sciences Institute, The Australian National University, Canberra, Australia.\newline \texttt{email::martin.helmer@anu.edu.au}}
  \and
  Elias Tsigaridas%
  \thanks{Inria Paris, Institut de Mathématiques de Jussieu - Paris
    Rive Gauche, Sorbonne Universit\'e and Paris Universit\'e, France.
    \texttt{email::elias.tsigaridas@inria.fr}}
}

\maketitle

\begin{abstract}
We present a probabilistic algorithm to test if a homogeneous polynomial ideal
$I$ defining a scheme $X$ in $\PP^n$ is radical using Segre classes and other geometric notions from
intersection theory. Its worst case complexity depends on the geometry of $X$.
If the scheme $X$ has reduced isolated primary components and no embedded components
supported the singular locus of $X_{\rm red}=\VV(\sqrt{I})$, then the worst case
complexity is doubly exponential in $n$; in all the other cases the complexity is singly
exponential. The realm of the ideals for which our radical testing procedure requires only
single exponential time includes examples which are often considered
pathological, such as the ones drawn from the famous Mayr-Meyer set of ideals which
exhibit doubly exponential complexity for the ideal membership problem.


\end{abstract}

\section{Introduction}
\label{sec:Intro}

We consider the problem of testing if an ideal is radical, or in other words if
the associated scheme is reduced. More precisely, for a homogeneous ideal
$I=\langle f_1,\dots, f_r \rangle$ in $\CC[x_0, x_1,\dots, x_n]$ with $d$ being the
maximum degree of the polynomials $f_i$, we present a (probabilistic) algorithm
to test if the scheme $X\subset \PP^n$  defined by $I$ is reduced, i.e.,~to test if $I$ is radical (up to saturation by the irrelevant ideal). When $X$ has
reduced isolated primary components that in addition have no embedded components outside
of the singular locus of $X_{\rm red}=\VV(\sqrt{I})$, then the radical is computed
via a single ideal saturation and the worst case complexity becomes doubly
exponential in $n$. In all the other cases the radical is not explicitly
computed and the worst case complexity is singly exponential in $n$. To
understand what types of embedded components we can deal with, while maintaining
the singly exponential complexity bound, consider the following example.

\begin{example}[Embedded components outside the singular locus of the radical]
  We work in $\PP^2$ with coordinates $x,y,z$ and we consider the scheme $X$
  defined by the
  ideal $$I=\left\langle -x^{2}y^{2}+y^{3}z,-x^{4}+x^{2}y\,z\right\rangle=\langle x^2-yz \rangle \cap \langle y^2,x^4-x^2yz \rangle .$$
  In this case the only isolated primary component of $X$ is the reduced component
  $X_{\rm red}=\mathbb{V}(x^2-yz)$; also $X_{\rm red}$ is a smooth curve in
  $\pp^2$ and so its singular locus is empty. However, $X$ has an embedded
  component $\mathbb{V}(y^2,x^4-x^2y z)$, supported on the point $[0:0:1]$. This
  embedded component would be detected by our algorithm using methods which have
  singly exponential worst case complexity in the number of variables.
\end{example}
 A more interesting example of an ideal $I$ defining a scheme $X$ with embedded components outside the singular locus of $X_{\rm red}$ is furnished by the homogeneous version of the Mayr-Meyer ideals \cite{mayr1982complexity} introduced by Bayer and Stillman \cite{bayer1988complexity}. The Mayr-Meyer family of ideals are generated by polynomials of degree $\mathcal{O}(d)$ in $\mathcal{O}(n)$ variables. Ideals in this family have the property that for some polynomial $f\in I$ the polynomials $r_i$ which solve the ideal membership problem via the expression $f=\sum_i r_if_i$ are such that $\deg(r_i)$ is doubly exponential in $n$, i.e.,~$\mathcal{O}(d^{2^n})$.  In \cite[\S2]{bayer1988complexity} a family of Mayr-Meyer ideals $J_n$ is described in a ring with $10n$ variables and these generators are homogenized to give a homogeneous ideal $J^\prime_n$ in $10n+1$ variables.
 Consider the $n=2$ case; in this case the homogeneous ideals $J^\prime_2$ are ideals in a ring with the $21$ variables $S_0, S_1, F_0, F_1, a_0, a_1, b_0, b_1, c_0, c_1, d_0, d_1, e_0, e_1, f_0, f_1, g_0, g_1, h_0, h_1, z$. One of these ideals is as follows (additional examples may be generated using the Macaulay2 \cite{M2} function \texttt{mayr} in \cite{Mayr_Meyer_M2}):
 {\small
\begin{multline}
    I= \langle {S}_{0}{h}_{0}-{F}_{1}z,{S}_{0}{g}_{0}-{F}_{0}{h}_{0},{S}_{0}{f}_{0}-{F}_{0}{h}_{0},{F}_{0}{e}_{0}-{F
      }_{0}{h}_{0},{S}_{0}{e}_{0}-{S}_{1}z,{F}_{0}{d}_{0}{h}_{0}-{F}_{1}z^{2},{F}_{0}{a}_{0}{h}_{0}-{S}_{1}z^{2}, \\ {F}_{0}{c}_{0}{g
      }_{0}-{F}_{0}{h}_{0}z,{F}_{0}{b}_{0}{f}_{0}-{F}_{0}{h}_{0}z,{F}_{0}{a}_{0}{f}_{0}-{F}_{0}{d}_{0}{g}_{0},{F}_{0}{c}_{0}{d}_{1
      }{f}_{0}{h}_{1}-{F}_{0}{d}_{0}{g}_{0}{h}_{1}z, \\ {F}_{0}{c}_{0}{c}_{1}{f}_{0}{g}_{1}-{F}_{0}{d}_{0}{g}_{0}{g}_{1}z,{F}_{0}{b}_{1
      }{c}_{0}{f}_{0}{f}_{1}-{F}_{0}{d}_{0}{f}_{1}{g}_{0}z,{F}_{0}{a}_{1}{c}_{0}{e}_{1}{f}_{0}-{F}_{0}{d}_{0}{e}_{1}{g}_{0}z \rangle.
    \end{multline}
  }
  
The ideal $I$ defines a scheme $X\subset \PP^{20}$ of codimension three. The
irreducible component $\mathbb{V}(F_0,S_0,z)$ contains the embedded components 
$\mathbb{V}(z,{g}_{0},{f}_{0},{F}_{0},{S}_{0},{F}_{1}{e}_{0}-{S}_{1}{h}_{0})$
and
$\mathbb{V}(z,{f}_{0}-{g}_{0},{e}_{0}-{h}_{0},{d}_{0},{c}_{0},{b}_{0},{a}_{0},{F}_{0},{S}_{1}-{F}_{1},{S}_{0})$;
neither of these components is contained in the singular locus of
$X_{\rm red}=\mathbb{V}(\sqrt{I})$. While it seems to us interesting that many
pathological examples, such as the Mayr-Meyer examples, with potentially
numerous and complicated embedded components, can be considered by our algorithm
in time at most singly exponential in the number of variables, it is worth
noticing that many desirable (and perhaps more mundane seeming) examples, such
as all ideals which are radical, require us to perform an operation (namely a
single ideal saturation) which has a doubly exponential worst case bound. The
latter case of course includes an ideal generated by generic polynomials, which
is expected to be radical, so this is in that sense the "most common" case.
There does, however, seem to be some hope that this saturation operation
could be avoided in some way, as it in fact computes much more information than
we strictly need for  our test.
Nevertheless, if we have the information that the ideal is "generic", then we can combine our approach with other algorithms that have single exponential complexity bounds in this case, e.g., \cite{krick1991membership,armendariz1995computation}.

We now give a brief conceptual overview of our approach. For a homogeneous ideal
$I$ in $\CC[x_0,\dots , x_n]$ we consider the scheme
$X=\mathbb{V}(I)\subset \pp^n$ associated to it. We first sample (at least one)
 generic point $p_i$ in each isolated primary component $W_i$ of $X$ and compute the
multiplicity of $p_i$ in $W_i$ via a calculation which requires only the computation of the
degree of an ideal (we do this in singly exponential time). If a generic point
has multiplicity greater than one, then the associated component is not reduced
and $I$ is not radical. If all isolated primary components are reduced, then we compute
the singularity subscheme of $X$, $\mathbb{S}\mathfrak{ing}(X)=\mathbb{V}(J)$
which is defined by an ideal $J$ whose generators consist of the
${\rm codim}(X)\times {\rm codim}(X)$ minors of the Jacobian matrix of a
generating set of $I$. This scheme $\mathbb{S}\mathfrak{ing}(X)$ has primary
components whose support is either an embedded component of $X$ not
contained in the singular locus of $X_{\rm red}$ or else are supported on the singular
locus of $X_{\rm red}$. We use a multiplicity based test to identify the
presence of embedded components of the first type. If no non-reduced structures
in $X$ have been identified at this stage of the algorithm we see that
$\sqrt{I}=I:J^\infty$; this computation is the {\bf only} computation involved
which has a doubly exponential worst case bound. At this point one could test if
$\sqrt{I}$ equals $I$ using standard methods. Instead, using the computed
generators of $\sqrt{I}$ we compute the {\em Segre class}
$s(X_{\rm red}, \pp^n)$ and using the generators of $I$ we compute the Segre
class $s(X, \pp^n)$, both these operations require  singly exponential time.
Then, using a result we show in \S\ref{subsec:Segre_IntegralClosure}, we
can conclude that $X$ is a reduced scheme in $\pp^n$ if and only if these two
Segre classes agree (in terms of ideals this translates to
$I:\langle x_0, \dots, x_n \rangle^\infty$ being radical).
We should also mention that we present a method to compute the multiplicity of
an irreducible variety contained in some lower dimensional isolated primary
component of a scheme (which is new); see Sections~\ref{sec:mult-subvariety} and
\ref{subsec:affineMult} and also \eqref{eq:MultInComp}. This method requires
only the computation of dimension and the degree of a polynomial ideal and makes
no assumptions on their structure.
We have trialed our algorithms on several test cases using Macaulay 2 \cite{M2}
where we also exploit the various routines for Segre class computations based on \cite{HH2019segre}.

Finally we remark that, while we have not been able to construct a method to
compute the Segre class $s(X_{\rm red}, \pp^n)=s(\mathbb{V}(I:J^\infty), \pp^n)$
without the generators of the ideal $\sqrt{I}=I:J^\infty$ in general; this question
seems worth further consideration. For this reason, we employ the Segre class
based test in the final step of our algorithm rather than some more standard
method to check equality of ideals. In particular, given a singly exponential
algorithm to compute $s(\mathbb{V}(I:J^\infty), \pp^n)$ using only the
generators of $I$ and $J$, our test for $X$ being reduced becomes singly
exponential in all cases. We note that, for example, if $I:J^\infty$ is a
complete intersection, given the degrees of a set of generators, the computation
of $s(\mathbb{V}(I:J^\infty), \pp^n)$ in fact requires only constant time. Such
facts, along with the fact that Segre class computations tend to require only
the computation of degrees of ideals, give us hope that a singly exponential
bound for the computation of $s(\mathbb{V}(I:J^\infty), \pp^n)$ may be possible
using only the generators of $I$ and $J$.

Historically, many different algorithms to test if an ideal $I$ is radical have
been presented in the literature; in most of the cases they test for radicality
in the process of computing the radical or the primary decomposition.
Roughly speaking, these algorithms tend to consist of a step which computes the
radical $\sqrt{I}$, a step which computes a reduced Gr\"obner basis for both the
ideal and its radical (in the same term order), and a step which checks if $I$
and $\sqrt{I}$ have the same reduced Gr\"obner basis. For the step which
computes the radical of $I$, the best known algorithms have (worst case) bounds
{\em doubly exponential} in the number of variables or in the dimension of the
ideal $I$, e.g.,~the algorithm of \cite{laplagne2006algorithm} has complexity
$(rd)^{2^{\mathcal{O}(n^2)}}$, and that of \cite{krick1991algorithm} has
complexity doubly exponential in the dimension of $I$; other algorithms have
similar or worse bounds in the general case. In certain special cases, such as
when $I$ is unmixed \cite[Proposition~4.1]{krick1991algorithm}, $I$ is a
complete intersection \cite{armendariz1995computation}, or $I$ has dimension
zero or one \cite{krick1991membership}, the dedicated algorithms have single
exponential complexity bounds (in the number of variables $n$). The complexity
of computing the Gr\"obner basis for the second step of testing if $I$ is
radical is analogous to radical computation, though in practice the computation
of $\sqrt{I}$ is often much more difficult than computing a reduced Gr\"obner
basis for $I$. Another approach of testing if an ideal is radical is to use the
algorithm \cite{eisenbud1992direct} that computes the primary decomposition, see
also \cite{GiTrZa-prim-decomp-88,ShYo-prim-decomp-96,DeGrPf-prim-decomp-99}.
This approach also has doubly exponential worst case complexity in the number of variables.

\subsection*{Terminology and Notation}
Since our algorithm arises from geometric ideas in intersection theory we will
frequently find it useful to employ more geometric terminology (as we have in
the introduction); we now make this terminology precise. We will, for the most
part, work over an algebraically closed field of chacterisitic zero which we
will denote $k$; usually this will be $\CC$ or the algebraic closure of the
rationals, $\overline{\QQ}$. In particular, given a polynomial ideal $I$ in
$k[x_0,\dots, x_n]$ we will think of it as defining a subscheme $X$ of either
$\pp^n$ if $I$ is homogeneous, or $k^{n+1}$ if not; in both cases we will write
$\mathbb{V}(I)$ for this scheme $X$ defined by $I$. A variety will be a reduced
scheme (we do not assume that a variety is irreducible). When the ideal $I$ has
primary decomposition $I=\mathfrak{q}_1\cap \cdots \cap \mathfrak{q}_r$, then we
will refer to the schemes $\mathbb{V}(\mathfrak{q}_i)$ as {\em primary
  components} of $X$. Similarly we will refer to the scheme
$\mathbb{V}(\mathfrak{q}_i)$ as an {\em isolated primary component}, if
$\mathfrak{p}_i=\sqrt{\mathfrak{q}_i}$ is a minimal prime. The irreducible
varieties $\mathbb{V}(\mathfrak{p}_i)$ for minimal primes $\mathfrak{p}_i$ will
be called {\em irreducible components}, while the irreducible varieties
$\mathbb{V}(\mathfrak{p}_j)$ for embedded primes
$\mathfrak{p}_j=\sqrt{\mathfrak{q}_j}$ will be called {\em embedded components}.
We write $X_{\rm red}$ for the reduced subscheme associated to the scheme
$X$, i.e.,~$X_{\rm red}$ denotes the variety defined by $\sqrt{I}$.

\paragraph{Outline of the paper.}
In the next section, Section \ref{sec:background}, we present the necessary background for Segre class computations and  summarize symbolic methods for sampling at least one point in each irreducible component of a subscheme of afffine space. Section~\ref{sec:collection} contains a collection of various results needed by our algorithm. In particular we present the connection of Segre classes with the integral closure of ideals, we show how to compute the degree of the isolated primary components of a scheme, and how to compute the multiplicities.
Finally, in Section~\ref{sec:main_algorithm} we present our algorithm along with its proof correctness and its complexity analysis.

\section{Background}\label{sec:background}
We briefly review several important concepts and results which will be used
extensively in later sections. In particular we review the notion of Segre
classes and their association to the so called {\em algebraic } or {\em
  Hilbert-Samuel multiplicity}. We will also review how we can sample and
represent symbolically points on a variety using the {\em Rational Univariant
  Representation (RUR)}.

\subsection{Segre Classes and Algebraic Multiplicity}
\label{sec:Segre-intro}

The algorithm that we present in Section \ref{sec:main_algorithm} makes extensive use of ideas from (computational) intersection
theory \cite{Fulton}, and in particular of Segre classes. Below we give
a brief overview of the relevant objects.

In general the Segre class $s(X,Y)$ is defined for pairs of schemes $X\subset Y$
and is an element of the {\em Chow group} of $X$ (see \cite[\S4]{Fulton} for
details). We will restrict our discussion to the case where $X$ and $Y$ are
subschemes of a projective space $\pp^n$ and will instead (via pushforward)
consider the Segre class $s(X,Y)$ as an element of the {\em Chow ring} of the
ambient projective space $\pp^n$, $A^*(\pp^n)$. More explicitly, if $H$ is the
rational equivalence class of a hyperplane in $\pp^n$, then we will represent
elements in the Chow ring $A^*(\pp^n)$ as polynomials in $H$ with integer
coefficients via the isomorphism $A^*(\pp^n)\cong \ZZ[H]/(H^{n+1})$; in this
representation the rational equivalence class of an irreducible variety $V$ of
codimension $c$ is $[V]=\deg(V)\cdot H^c\in A^*(\pp^n)$, where $\deg(V)$ denotes
the {\em degree} of the variety $V$. Hence, in our setting the Segre class
$s(X,Y)$ will be represented as a polynomial of degree at most $n$ in $H$ with
integer coefficients.

Consider subschemes $X$ and $Y$ of $\pp^n$ and let ${\rm Bl}_XY$ denote the blowup of
$Y$ along $X$. This comes equipped with a map $\pi:{\rm Bl}_XY\to Y$ and an
exceptional divisor $E=\pi^{-1}(X)$. In the case where $Y$ is a irreducible variety and $I_X=(f_0,\dots, f_r)$ is the ideal defining the
scheme $X$ we have that ${\rm Bl}_XY $ is isomorphic to the graph $\Gamma$ of
$f_0, \dots, f_r$. More specifically, the graph
is $$ \Gamma=\overline{\{(y,z)\:|\: y\in Y,\;\; z=(f_0(y):\cdots :f_r(y))\}}\subset \pp^n \times \pp^r,
$$ and $\Gamma\cong {\rm Bl}_XY$. In this setting $\pi$ is the projection map $\Gamma\to Y$
from the graph onto $Y$ and the exceptional divisor $E$ is the inverse image of
$X$ under this projection map. The blowup (and hence the graph) captures
information about how $X$ sits inside $Y$ and in particular quantifies how
singular the embedding of $X$ in $Y$ is.

Informally speaking, the Segre class attempts to extract key parts of this
information by considering how $E=\pi^{-1}(X)$ intersects with a similar scheme
which is perturbed to be in general position inside of $\Gamma$. How `general'
of a position we can put a version of $\pi^{-1}(X)$ into inside $\Gamma$ is
significantly determined by how closely $X$ and $Y$ are related. We now give a
formal definition of the Segre class $s(X,Y)$.
\begin{definition}
  Let $X$ and $Y$ are subschemes of $\pp^n$.  We have a blowup diagram
  \[\begin{tikzcd}
    E \ar[d,"\eta"] \ar[r] \arrow[dr, phantom, "\square"] & \mathrm{Bl}_X Y \ar[d,"\pi"] \\
    X \ar[r] & Y \rlap{\ ,}
    \end{tikzcd} \]
    where $E$ is the exceptional divisor. The \emph{Segre class} of $X$ in $Y$ is
  \[
    s(X,Y) = \eta_*((1 - E + E^2 - \dots) \frown [E]) \in A_*(X).
  \]
We will abuse notation and also write $s(X,Y)$ for the pushforward to $A^*(\pp^n)$. \label{def:Segre}
\end{definition}

We now briefly review results of \cite{HH2019segre} which give a explicit and
computable expression for the Segre class. Using the notation above, consider
the rational map $pr_X:Y\to \pp^r$ which is defined by
$pr_X:p\mapsto(f_0(p):\cdots f_r(p))$. We then have the following diagram:
\begin{equation}
\begin{tikzcd}
  & \mathrm{Bl}_X Y \ar[dr,"\rho"] \ar[dl,swap,"\pi"] \arrow[draw=none]{r}[sloped,auto=false]{\subset} & \pp^n \times \pp^r \\
  Y \ar[rr,dashed,"pr_X"] & & \pp^r
\end{tikzcd}\label{eq:SegreDiagBackGrd}
\end{equation}
Computationally, rather than trying to understand the self intersections of $\pi^{-1}(X)$ we will instead seek to understand $pr_X^{-1}(\pp^{r-\dim(Y)-i})-X$ for $i=0,\dots, \dim(Y)$. In particular we will study the {\em projective degrees}, $\pdg_i(\cV, \cW)$, which are the coefficients appearing in the class
$$
G(X,Y)=\sum_{i=0}^{\dim(Y)} [pr_X^{-1}(\pp^{r-\dim(Y)-i})-X] =\sum_{i=0}^{\dim(Y)} \pdg_i(\cV, \cW) H^{n-i}.
$$

We now give explicit expressions for these objects in terms of polynomial ideals in $k[\bm{x}] = k[x_0,\dots, x_n]$, the homogenous coordinate ring of $\pp^n$. As above, $k$ is an algebraically closed field of characteristic zero.

\begin{definition}[Projective Degrees]
  \label{Def:ProjDeg}
  Consider the subschemes  $\cV \subset \cW  \subset \pp^n$.
  Let $\cV$ be defined by the
  homogeneous ideal $I_{\cV}=(f_0,\dots, f_r)$
  and let $d$ be the maximum
  degree of the defining polynomials of $\cV$.
  We can, without
  loss of generality, assume that $\deg(f_i)=d$ for all $0 \leq i \leq r$.
  Define
  the projection of $\cW$ along $\cV$ as the rational map
  \begin{equation}
    \label{eq:piX}
    \begin{array}{lllll}
      \pi_{\cV}: & \cW & \dashrightarrow & \pp^r \\
            & \bm{p}&  \mapsto &  (f_0( \bm{p}):\cdots: f_r(\bm{p})) .
    \end{array}
  \end{equation}
  The projective degrees of $\pi_{\cV}$
  are the sequence of integers $(\pdg_0(\cV, \cW), \dots, \pdg_{\dim(Y)}(\cV, \cW))$, where
  \begin{equation}
    \label{eq:pdeg-1}
    \pdg_i(\cV, \cW)=
    \deg\left( \pi_X^{-1}\big(\pp^{r-(\dim(\cW)-i)}\big) - \cV \right) .
    \end{equation}
Equivalently (via \cite[Proposition~3.3]{HH2019segre}), we can define the projective degrees of
$\pi_X$ as
\begin{equation}
  \label{eqProjDegSchemes}
  \pdg_i(\cV, \cW)=\deg(\cW \cap L^{(i)} \cap \cU - \cV),
\end{equation}
where
$\cU= \VV(P_1, \dots, P_{\dim(\cW)-i})$
with $ P_j = \sum_{\nu=0}^{r} \lambda_{\nu} f_{\nu}$
for generic $\lambda_{\nu}\in k$
and  $L^{(i)}\subset \pp^n$ is a
generic linear space of codimension $i$.
\end{definition}

Using the latter characterization of projective degrees, we can express them
with respect to the ideals corresponding to the subschemes
$\cV \subset \cW \subset \pp^n$, that is $I_{\cV} = \langle f_0,\dots, f_r\rangle$ and $I_{\cW}$.
Let $t$ be a new variable; then, for $0 \leq i \leq r$, we define the
family of ideals
\begin{equation}
  \label{eq:ProjDegIdealEqs}
  \mathcal{I}_i= I_{\cW} +
  \Big\langle\sum_{j=0}^r \lambda_{1,j}f_j,\dots, \sum_{j=0}^r \lambda_{\dim(Y)-i,j}f_j,
  \ell_1(\bm{x}),\dots,\ell_i(\bm{x}),\ell_0(\bm{x})-1 ,
  1 -t\sum_{j=0}^r \lambda_{0,j}f_j \Big\rangle
  \subset k[\bm{x}][t],
\end{equation}
where $\ell_\nu(\bm{x})=\sum_{j=0}^n\theta_{\nu,j}x_j$ for generic
$\theta_{l,j}\in k$, and generic $\lambda_{l,j}\in k$.  We also write
$\mathcal{I}_i(\cV, \cW)$ to denote the dependency on the subschemes
$\cV$ and $\cW$.
By \cite[Theorem~3.5]{HH2019segre} the projective degree of dimension $i$ can be computed as
\begin{equation}
  \pdg_i(\cV, \cW) = \dim_k \left( k[\bm{x}, t] / \mathcal{I}_i\right).\label{eq:ProjDegIdeal}
\end{equation}
In \cite[\S3]{HH2019segre} an explicit formula for the Segre class $s(X,Y)$ is given which depends only on the numbers $\pdg_i(\cV, \cW)$ obtained via the computation in \eqref{eq:ProjDegIdeal} and the degree $d$ of the generators of $I_X$ (though the final Segre class does not depend on $d$). The projective degrees can also be used to compute the {\em algebraic multiplicity} of a variety inside a scheme. 

Let $X\subset \pp^n$ be an irreducible (and reduced) subvariety and
let $Y\subset \pp^n$ be a pure dimensional subscheme with corresponding ideals $I_X=\langle f_0, \dots, f_r \rangle$ and $I_Y$ in $k[\bm{x}]=k[x_0,\dots, x_n]$. The
\textit{algebraic or Hilbert-Samuel multiplicity} of $X$ on $Y$, denoted $e_XY$ is the
integer coefficient of $[X]$ in the Segre class $s(X,Y)$. This multiplicity is more classically defined via the Hilbert-Samuel polynomial of the local ring $(k[x_0,\dots, x_n]/I_Y)_{I_X}$, where the subscript denotes localization at the prime ideal $I_X$, see, e.g., \cite[Example~4.3.1,Example~4.3.4]{Fulton} or \cite[Chapter~12]{eisenbud2013commutative}.  In practice we will compute this multiplicity as follows. Let $d$ be
the maximum degree among a set of generators of $I_X$. By
\cite[Theorem~5.2]{HH2019segre} we have that
\begin{equation}
  e_XY=\frac{\deg(Y) d^{\dim(Y)-\dim(X)} - \pdg_{\dim(X)}(X,Y)}{\deg(X)},
  \label{eq:Multiplicity}
\end{equation}
where $\pdg_{\dim(X)}(X,Y)$ is the dimension $X$ projective degree of
$X$ in $Y$, see \eqref{eq:ProjDegIdeal},
\eqref{eqProjDegSchemes}, or Definition \ref{Def:ProjDeg}.

It is a classical result of Samuel \cite[II §6.2b]{samuel1955methodes} (see also \cite[Ex.~12.4.5(b)]{Fulton}) that $e_XY=1$ if and only a generic point in $X$ is not contained in the singularity subscheme of $Y$. We state this as a proposition below. 
\begin{proposition}[Samuel \cite{samuel1955methodes}]
  Let $X\subset \pp^n$ be an irreducible subvariety and
let $Y\subset \pp^n$ be a pure dimensional subscheme. Then $e_XY=1$ if and only if a generic point in $Y$ is reduced and $X$ is not contained in the singular locus of $Y_{\rm red}$. \label{prop:RadicalTestIrreducibleCase}
\end{proposition}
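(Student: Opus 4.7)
The plan is to reduce this to a statement in commutative algebra about the local ring $A = \mathcal{O}_{Y,X}$ (i.e., localize the homogeneous coordinate ring of $Y$ at the homogeneous prime $I_X$), then apply the associativity formula for Hilbert--Samuel multiplicities together with Nagata's theorem identifying multiplicity one with regularity. Since $e_XY$ coincides with the Hilbert--Samuel multiplicity $e(\mathfrak{m}_A, A)$ by \cite[Example~4.3.4]{Fulton}, we may work entirely inside $A$. Note that because $Y$ is pure-dimensional, every minimal prime $\mathfrak{p}$ of $A$ satisfies $\dim(A/\mathfrak{p}) = \dim(A)$, so the associativity formula (e.g. \cite[Chapter~12]{eisenbud2013commutative}) gives
\[
e(\mathfrak{m}_A, A) \;=\; \sum_{\mathfrak{p}} \ell(A_{\mathfrak{p}}) \cdot e\bigl(\mathfrak{m}_A,\, A/\mathfrak{p}\bigr),
\]
where the sum ranges over the (finitely many) minimal primes of $A$. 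Since each summand is a positive integer, $e_XY = 1$ forces the sum to have exactly one term, with both factors equal to $1$; conversely, one such summand with both factors $1$ forces $e_XY = 1$.

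For the direction ($\Leftarrow$), I would argue as follows. The hypothesis that $X \not\subset \mathrm{Sing}(Y_{\rm red})$ forces $X$ to be contained in a unique irreducible component of $Y$ (otherwise $X$ would lie in the intersection of two components and thus inside the singular locus), so there is a single minimal prime $\mathfrak{p}$ of $A$. Generic reducedness of $Y$ along this component means $A_{\mathfrak{p}}$ has length one, i.e.\ it is a field. The smoothness of $Y_{\rm red}$ at the generic point of $X$ means $A/\mathfrak{p}$ is a regular local ring, which has Hilbert--Samuel multiplicity $1$ (standard: a regular local ring of dimension $e$ has associated graded ring a polynomial ring in $e$ variables over its residue field). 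Plugging into the associativity formula yields $e_XY = 1$.

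For the direction ($\Rightarrow$), assume $e_XY = 1$. The associativity formula forces a single minimal prime $\mathfrak{p}$, with $\ell(A_{\mathfrak{p}}) = 1$ and $e(\mathfrak{m}_A, A/\mathfrak{p}) = 1$. The condition $\ell(A_{\mathfrak{p}}) = 1$ says $A_{\mathfrak{p}}$ is reduced (a field), which is generic reducedness of $Y$ along the unique component through $X$; uniqueness of $\mathfrak{p}$ together with the next point rules out $X$ sitting in an intersection of components. To conclude that $A/\mathfrak{p}$ is regular (and hence that $Y_{\rm red}$ is smooth at the generic point of $X$), I would invoke Nagata's theorem: a formally equidimensional Noetherian local ring of multiplicity one is regular. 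Since $A/\mathfrak{p}$ is a localization of a finitely generated $k$-algebra modulo a prime, it is an excellent local domain, and in particular formally equidimensional, so Nagata's theorem applies.

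The main obstacle is this last step, the implication ``multiplicity one plus unmixed implies regular,'' which is the nontrivial part of Samuel's theorem. It fails for arbitrary Noetherian local rings and really does require the equidimensionality hypothesis supplied by the purity of $Y$ and the integrality of $A/\mathfrak{p}$; everything else is bookkeeping with the associativity formula and the elementary fact that regular local rings have multiplicity one.
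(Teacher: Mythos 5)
Your proof is correct. Note that the paper does not actually prove this proposition: it is stated as a classical result with citations to Samuel and to \cite[Ex.~12.4.5(b)]{Fulton}, and your argument --- reduce to the local ring $\mathcal{O}_{Y,X}$, apply the associativity formula $e(\mathfrak{m}_A,A)=\sum_{\mathfrak{p}}\ell(A_{\mathfrak{p}})\,e(\mathfrak{m}_A,A/\mathfrak{p})$ (valid over all minimal primes because purity of $Y$ makes every component contribute), and then invoke Nagata's theorem that a formally equidimensional local ring of multiplicity one is regular --- is precisely the standard proof outlined in those references. You correctly identify the two points that carry the weight: purity of $Y$ is what lets the associativity formula see every component through $X$ (so $e_XY=1$ forces a unique component, generic reducedness along it, and $e(\mathfrak{m}_A,A/\mathfrak{p})=1$), and the excellence/equidimensionality of $A/\mathfrak{p}$ as a localization of an affine domain is what legitimizes the appeal to Nagata. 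The only cosmetic caveat is that the paper's ``local ring'' is the localization of the homogeneous coordinate ring at $I_X$ rather than the scheme-theoretic local ring at the generic point of $X$; the two have the same Hilbert--Samuel multiplicity, so your reduction goes through either way.
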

This fact along with a formula derived in \S\ref{sec:mult-subvariety} will be adapted to furnish a test which tells us when an isolated primary component of a scheme is generically reduced. 

In the final section we will additionally need to work with varieties $X$ which are not irreducible, in this case it will be most convenient to write our criterion in terms of the $\dim(X)$ part of the Segre class. Let $\cW$ be a pure dimensional subscheme of $\pp^n$ and let $\cV$ be a closed
  subscheme of $\cW$. Let $d$ be the maximum degree of the equations
  defining $\cV$. With the notations above the result of \cite[Corollary~ 3.14]{HH2019segre} gives the following: 
\begin{equation}
    \{s(\cV, \cW)\}_{\dim(X)}=
    d^{\dim(\cW)-\dim(\cV)} \, \deg(\cW) - \pdg_{\dim(\cV)}(\cV, \cW).\label{eq:SegreDimX}
\end{equation} 

\subsection{Rational Univariate Representation and Computing One Point in Each Irreducible Component}
\label{sec:rur}

An important part of the  main algorithm presented in Section \ref{sec:main_algorithm} is the ability to sample points from each irreducible component of a scheme $X\subset \CC^n$. Approximate point samples could be furnished using methods such as homotopy continuation from numeric algebraic geometry \cite{sommese2005numerical}, we however opt for symbolic methods, e.g.,~\cite{Rojas-sparse-rur}, in our presentation as these methods have well understood worst case complexities. We note that our algorithm certainly could be implemented using numerical methods instead, see also Remark \ref{remark:NumericVersion}.
In the symbolic setting, it is important for the sampling algorithms we employ to have an efficient and exact representation of the sampled points. 
For this  we exploit the  {\em rational univariate representation (RUR)}
\cite{Rou:rur:99}, see also \cite{AlBeRoWo-idem-96,MaScTs-srur-17}. 

Briefly,
given a zero dimensional ideal in $\QQ[x_1, \dots, x_n]$,
RUR represents the coordinates of the associated points as a univariate rational function
evaluated at the roots of univariate polynomial. 
It is of the form
$R(t), x_1 = \tfrac{R_1(t)}{R_0(t)}, \dots, x_n = \tfrac{R_n(t)}{R_0(t)}$
where $R, R_0, R_1, \dots, R_n \in \QQ[t]$ and $t$ is a new variable.
The following theorem provides a brief presentation of RUR and
summarizes some of its important properties.

We note in the Theorem statement below we consider the (reduced) zero set associated to a polynomial system (or polynomial ideal). These methods do not require that the input polynomials define a radical ideal but will only furnish information about the zero set of the input polynomials, i.e.,~about the associated reduced subscheme. 
\begin{theorem}
  \label{thm:RUR}
  Consider the solution set 
  $W=\{ x\in \CC^n \;|\; f_1(x) = \cdots = f_n(x) = 0 \}$, where the polynomials
  $f_i \in \ZZ[x_1, \dots, x_n]$ are dense of degree $d$ and maximum
  coefficient bitsize $\tau$. 
  Then, there is an algorithm that computes univariate
  polynomials of $R, R_1, \dots, R_n \in \ZZ[t]$ such that:
  \begin{enumerate}
  \item The degrees of $R, R_1,\ldots, R_n$ are all bounded by
    $d^n$ and their bitsize by $\OO(d^n + n d^{n-1} \tau)$.
  \item For any root $\xi$ of $R(t) = 0$, the tuple
    $r(\xi) := \left(\frac{R_1(\xi)}{R'(\xi)}, \dots, \frac{R_n(\xi)}{R'(\xi)}\right) \in (\CC^{*})^n$
    is a point in $W$; $R'$ is the derivative of $R$ with respect to $t$.
    We call this representation the Rational Univariate Representation (RUR) \cite{Rou:rur:99}.
    \item Every $r(\xi)$ corresponds to an isolated point of $W$
    or to a point on an irreducible component of $W$ (of dimension greater than 0).
    In addition, the set of points
    $\{ r(\xi) \in \CC^n \,|\, R(\xi)=0 \}$ contains all the
    0-dimensional irreducible components of $W$ in $\CC^n$.
    \item  We compute the RUR by performing $d^{\OO(n)}$
  arithmetic operations.
\end{enumerate}
\end{theorem}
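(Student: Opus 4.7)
The plan is to follow Rouillier's construction~\cite{Rou:rur:99}, with careful handling of the case where the variety $W$ has positive-dimensional components.

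First, I would pick a generic $\ZZ$-linear form $u = \sum_i a_i x_i$ with coefficients of controlled height and argue, via a standard Zariski-open genericity argument, that $u$ is a \emph{separating element} for the finite set of points to be sampled. For systems whose ideal is zero-dimensional, the sampled points are exactly the isolated solutions of $f_1=\cdots=f_n=0$. When positive-dimensional components are present, one reduces to a zero-dimensional setting either by passing to a saturated quotient at the relevant primes or, more efficiently, by applying a geometric resolution with generic lifts; each $r(\xi)$ then is either an isolated solution or a generic witness point on a positive-dimensional component, which establishes parts 2 and 3 together with the coverage statement for the zero-dimensional irreducible components.

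Second, for each zero-dimensional ideal $I$ under consideration, I would compute $R(t)\in\ZZ[t]$ as (a denominator-cleared form of) the minimal polynomial of multiplication by $u$ on the finite-dimensional $\QQ$-algebra $\QQ[\bm{x}]/I$, and each $R_i(t)$ by writing $x_i$ as a $\QQ$-linear combination of powers of $u$ modulo $I$. The pointwise identity $x_i(\xi) = R_i(\xi)/R'(\xi)$ is the classical Gorenstein trace/residue interpolation formula for the inverse of a multiplication map on a zero-dimensional algebra. Computationally, this is effected by a Gr\"obner basis computation in degree-reverse-lex order followed by the FGLM change of order, or equivalently via the geometric resolution algorithm of Giusti--Heintz--Schost.

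For the quantitative bounds of part 1, Bezout's theorem gives $\deg R \le d^n$ (counting isolated solutions with multiplicity), and the bitsize bound $\OO(d^n + n d^{n-1}\tau)$ follows from standard height estimates on multivariate resultants and subresultants in terms of the input bitsize $\tau$; explicit analyses appear in~\cite{Rou:rur:99, MaScTs-srur-17}. The arithmetic complexity bound $d^{\OO(n)}$ in part 4 follows from the Macaulay-bound complexity of zero-dimensional Gr\"obner bases, or equivalently from the complexity of the geometric resolution. The main technical obstacle is twofold: (i) producing a separating form with coefficients small enough that the subsequent resultant bitsizes remain within the stated $\OO(d^n+nd^{n-1}\tau)$ bound, and (ii) handling positive-dimensional components without inflating the worst-case complexity beyond $d^{\OO(n)}$, since naive saturation strategies would introduce doubly-exponential blowups. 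The first is handled by a bounded-height Schwartz--Zippel argument, and the second by the geometric resolution paradigm, which natively accommodates positive-dimensional components via generic lifts.
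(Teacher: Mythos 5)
Your proposal is sound, but it takes a genuinely different computational route from the paper on the one point that actually matters here, namely the positive-dimensional case. The paper's proof is essentially a pointer to the literature: the RUR polynomials and their bitsize bounds come from the resultant-matrix construction of \cite{MaScTs-srur-17}, the witness-point statement of item 3 from \cite{Rojas-sparse-rur}, and --- crucially --- degenerate and positive-dimensional systems are handled by Canny's \emph{generalized characteristic polynomial} \cite{Canny-GCP-90} (or Rojas's toric variant), i.e., by an infinitesimal perturbation of the Macaulay or sparse resultant matrix whose lowest-order nonvanishing coefficient in the perturbation parameter yields a nonzero projection operator even when excess components are present; the $d^{\OO(n)}$ arithmetic count then follows from \cite{Canny-PSPACE}. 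You instead reduce to zero-dimensional ideals and invoke either Gr\"obner plus FGLM or the geometric-resolution paradigm with generic lifts. Both routes are legitimate and both deliver items 2--4; the GCP route buys a uniform, purely linear-algebraic treatment (one perturbed resultant matrix, no case analysis on components) with bit complexity already analyzed in \cite{MaScTs-srur-17}, whereas your route is closer in spirit to Rouillier's original presentation and makes the geometric meaning of the witness points more transparent. Two small cautions: your first alternative for the positive-dimensional case (``passing to a saturated quotient at the relevant primes'') should be dropped rather than merely deprecated, since, as you note, it does not stay within the stated complexity; and your bitsize bound for part 1 is justified via resultant height estimates, which is consistent only because the RUR polynomials are canonical given the separating form --- it is worth saying explicitly that the bound is on the output objects and is independent of which algorithm produced them.
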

\begin{proof}
  For the bounds on the bitsize of the coefficients and a
  generalization to the sparse case and how we compute the RUR using resultants  we refer the reader to
  \cite[Theorem~4.3]{MaScTs-srur-17}.
  For item 3~we refer the reader to \cite{Rojas-sparse-rur}.

  To compute the RUR in the case where the system is not 0-dimensional we use Canny's generalized characteristic polynomial \cite{Canny-GCP-90}, or
  the toric variant by Rojas \cite{Rojas-sparse-rur,rojas-arxiv-1997}, that relya
  on Macaulay and sparse resultant matrices, respectively, see \cite{MaScTs-srur-17}. The complexity bound follows
  from \cite{Canny-PSPACE}.
\end{proof}

We remark that the above result is presented in the case of a square system, however if the system is not square, then one may simply take a general linear combination of the generators to get a square system and verify which sampled points satisfy the original system. To get samples from all irreducible components of higher dimensions we may then add a generic linear polynomial to the defining ideal (i.e.,~intersect the scheme with a generic hyperplane) and repeat this procedure. Such considerations are discussed in more detail in the references.

The algorithms (and the corresponding mathematical results) that support
Theorem~\ref{thm:RUR} allow us to  sample at
least one (generic) point from each irreducible component of an arbitrary subscheme of $\CC^n$. The symbolic algorithms that
support these computations rely on resultant or Gr\"obner basis computations and
careful analysis of the bitsize of the involved polynomial computations. We
refer the interested reader to \cite{elkadi1999new} for an approach based on
B\'ezoutian matrices, to \cite{MaScTs-srur-17, Rojas-sparse-rur} for an approach based on
resultant matrices and to
\cite{keyser2005exact} for an implementation. This leads us to the following corollary.
\begin{corollary}
  Let $I$ be a polynomial ideal generated by polynomials of degree at most $d$
  in $\QQ[x_1,\dots, x_n]$ defining a scheme $X\subset\CC^n$. Then, we can obtain
  a collection of RUR of points containing at least one generic point in each
  irreducible component of $X$ in at most $d^{\mathcal{O}(n)}$ arithmetic
  operations. \label{cor:RUR}
\end{corollary}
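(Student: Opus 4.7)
The plan is to reduce the corollary to repeated applications of Theorem~\ref{thm:RUR} by slicing $X$ with generic hyperplanes, so that points on each irreducible component appear as isolated zeros of a square system to which the RUR machinery applies.

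First, to put the input in the form required by Theorem~\ref{thm:RUR}, I would square up the system: if $I = \langle f_1, \ldots, f_r\rangle$ with $r \neq n$, I replace the generators by $n$ generic $\QQ$-linear combinations $g_j = \sum_i \lambda_{ij} f_i$. The resulting square system has degree at most $d$, vanishes on $\mathbb{V}(I)$, and may introduce extraneous solutions which are filtered out at the end by substituting each sampled point back into the original $f_i$ (via the RUR, this is a univariate reduction modulo $R(t)$). Next, for each target dimension $k = 0, 1, \ldots, n$, I would augment the system with $k$ generic affine linear forms $\ell_1,\ldots,\ell_k$ and apply Theorem~\ref{thm:RUR} to the resulting square system of $n$ polynomials. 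By genericity, every irreducible component of $X$ of dimension exactly $k$ meets the linear space $\mathbb{V}(\ell_1,\ldots,\ell_k)$ in finitely many points which appear as isolated zeros of the augmented system; components of dimension less than $k$ miss the generic slice entirely, and components of dimension greater than $k$ contribute positive-dimensional loci to the slice rather than extra isolated points. Item~3 of Theorem~\ref{thm:RUR} guarantees that the RUR extracted via Canny's generalized characteristic polynomial \cite{Canny-GCP-90} (or its sparse toric variant \cite{Rojas-sparse-rur}) includes at least one representative from each zero-dimensional component of the slice, hence from each $k$-dimensional component of $X$.

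The degree bound of each augmented system remains $d$ (the $\ell_i$ are linear), so every RUR computation costs $d^{\mathcal{O}(n)}$ arithmetic operations by Theorem~\ref{thm:RUR}. Iterating over the $n+1$ possible dimensions and taking the union of the resulting point samples thus produces at least one generic point in every irreducible component of $X$ at a total cost of $(n+1)\cdot d^{\mathcal{O}(n)} = d^{\mathcal{O}(n)}$, giving the stated bound.

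The main technical point is verifying that item~3 of Theorem~\ref{thm:RUR} really does apply when the augmented system has positive-dimensional components mixed in with the isolated zeros we are trying to extract; this is precisely what Canny's GCP and the toric resultant variant are engineered to handle. Beyond this, the argument reduces to routine bookkeeping: genericity of the $\lambda_{ij}$ and the slicing forms $\ell_i$ supplies the transversality needed so that the $k$-dimensional components of $X$ contribute genuine isolated points to the slice of codimension $k$, and the filtering step against the original generators removes the spurious points introduced by the square-up.
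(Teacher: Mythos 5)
Your proposal is correct and follows essentially the same route as the paper, which likewise reduces the corollary to Theorem~\ref{thm:RUR} by squaring up via generic linear combinations of the generators, slicing with generic linear forms for each candidate dimension, and filtering spurious points against the original system. The only bookkeeping nit is that to keep the sliced system square you should take $n-k$ generic combinations of the $f_i$ together with the $k$ linear forms (rather than $n$ combinations plus $k$ forms), but this does not affect the argument or the $d^{\mathcal{O}(n)}$ bound.
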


\section{A Collection of Results Which Enable Our Algorithm}
\label{sec:collection}

In this section we gather together several results which will be employed by our main algorithm which is presented in Section \ref{sec:main_algorithm}. We first consider, in \S\ref{subsec:Segre_IntegralClosure}, some results regarding the relationships between the radical of an ideal, its integral closure, and a Segre class associated to the corresponding scheme. Following this, in \S\ref{section:degree_Isolated_primary_comp_dim_mu}, we show how the degree of isolated primary components of any dimension can be computed in worst case singly exponential time with respect to the number of variables. Then, in \S\ref{subsec:RadicalPrimeIsolateComps}, we give a result which lets us compute the radical of an ideal in one saturation when all isolated primary components of the associated scheme are reduced. Finally, in \S\ref{sec:mult-subvariety} and \S\ref{subsec:affineMult}, we look at multiplicity and (partial) Segre class computations relative to isolated primary components of any dimension and how these computations can be done in a generic affine patch of projective space.   
\subsection{Segre Classes and the Integral Closure of Ideals}
\label{subsec:Segre_IntegralClosure}
We will now give a test which will let us determine if an ideal $I$ is equal to
its radical (upto a factor of the irrelevant ideal) via a Segre class
computation. En-route we shall be led to consider the {\em integral closure}
$\overline{I}$ of the ideal $I$. We shall in particular be interested in the
case where $\overline{I}=\sqrt{I}$.

We begin by reviewing several key results from the literature. First, we present
a result of Gaffney and Gassler \cite{gaffney1999segre}, that we slightly modify
to fit in our notational framework. This result makes clear the connection
between Segre classes and integral closures of ideals. One can also deduce this
result from other more recent works, such as
\cite{aluffi2017many,aluffi2017segre} that use a more similar terminology to
ours.

\begin{proposition}[Corollary 4.9 of \cite{gaffney1999segre}]
  Let $X$ and $Y$ be subschemes of $\pp^n$ defined by ideals $I_X$ and $I_Y$ in
  $k[x_0,\dots, x_n]$ with $X_{\rm red}=Y_{\rm red}$. Then, we have that
  $s(X,\pp^n)=s(Y,\pp^n)$ if and only
  if $$\overline{I_X}:\langle x_0,\dots, x_n \rangle^\infty=\overline{I_Y}:\langle x_0,\dots, x_n \rangle^\infty.$$\label{propn:SegreIntClosure}
\end{proposition}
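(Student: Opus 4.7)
The plan is to pass everything through the blowup side and the Rees algebra, where both conditions have a clean geometric meaning. First I would dispose of the saturation by the irrelevant ideal $\mathfrak{m} = \langle x_0,\dots,x_n \rangle$: for a homogeneous ideal $J \subset k[x_0,\dots,x_n]$, the subscheme $\mathbb{V}(J) \subset \pp^n$ is unchanged by replacing $J$ with $J:\mathfrak{m}^\infty$, so the condition $\overline{I_X}:\mathfrak{m}^\infty = \overline{I_Y}:\mathfrak{m}^\infty$ is equivalent to saying that $\overline{I_X}$ and $\overline{I_Y}$ cut out the same closed subscheme $\overline{X}$ of $\pp^n$. Thus I need to prove that $s(X,\pp^n) = s(Y,\pp^n)$ if and only if $\overline{X} = \overline{Y}$ in $\pp^n$, assuming $X_{\rm red} = Y_{\rm red}$.

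For the direction $(\Leftarrow)$, the key fact is that the Segre class is invariant under passage to the integral closure of the defining ideal. Concretely, by classical Rees theory (see also the Rees algebra description used in \cref{sec:Segre-intro}), the normalized blowup $\widetilde{\mathrm{Bl}}_X \pp^n = \mathrm{Proj}\bigl(\bigoplus_{k \ge 0}\overline{I_X^k}\bigr)$ depends only on $\overline{I_X}$ (up to $\mathfrak{m}$-saturation). The natural normalization map $\nu : \widetilde{\mathrm{Bl}}_X \pp^n \to \mathrm{Bl}_X \pp^n$ is finite and birational, and sends the normalized exceptional divisor $\widetilde{E}$ to a divisor whose push-forward in the Chow group agrees with $E$. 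Using the projection formula componentwise in \cref{def:Segre}, one obtains that $s(X,\pp^n)$ is computable equally well from the normalized blowup. Hence when $\overline{I_X}$ and $\overline{I_Y}$ give the same saturated ideal, the normalized blowups, their exceptional divisors, and therefore the Segre classes coincide.

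For the direction $(\Rightarrow)$, which I expect to be the main obstacle, the approach is a Rees-type numerical rigidity theorem. Classical Rees multiplicity theorem states that for $\mathfrak{m}$-primary ideals $I \subset J$ in a formally equidimensional local ring, equality of Hilbert-Samuel multiplicities forces $\overline{I} = \overline{J}$. The Gaffney-Gassler theory of Segre numbers extends this beyond the $\mathfrak{m}$-primary case: the components of the Segre class, localized along the distinguished subvarieties of the pair $(X,\pp^n)$, form a list of numerical invariants that completely determine the integral closure of $I_X$ among ideals with a fixed radical. The strategy is therefore: (i) identify the distinguished subvarieties appearing in the support of $s(X,\pp^n)$ and $s(Y,\pp^n)$; since $X_{\rm red} = Y_{\rm red}$, these supports coincide; (ii) read off from the equality $s(X,\pp^n) = s(Y,\pp^n)$ that all Segre numbers along each distinguished subvariety agree for the two ideals; (iii) apply the generalized Rees theorem locally at each such subvariety to conclude $\overline{I_X} = \overline{I_Y}$ after saturation by $\mathfrak{m}$.

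The essential obstacle is step (iii): verifying that the locally-indexed Segre numbers obtained from the global class $s(X,\pp^n)$ actually match the local invariants to which Rees-type rigidity applies, and that $X_{\rm red} = Y_{\rm red}$ supplies the equidimensionality and same-radical hypothesis needed to run Rees's theorem on each local ring at a distinguished subvariety. Once these local identifications are set up carefully, the two integral closures must agree on every local ring of $\pp^n$, hence globally after $\mathfrak{m}$-saturation, completing the proof.
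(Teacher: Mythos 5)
The paper offers no proof of this proposition at all --- it is imported verbatim (modulo notation) as Corollary~4.9 of Gaffney--Gassler --- so there is no in-paper argument to measure yours against; I can only assess your outline on its own terms. Your direction $(\Leftarrow)$ is sound: since $\overline{I^k}=\overline{(\overline{I})^k}$ for every $k$, the normalized Rees algebra, hence the normalized blowup and its exceptional divisor, depend only on $\overline{I_X}$ up to $\mathfrak{m}$-saturation, and birational invariance of Segre classes under proper pushforward lets you compute $s(X,\pp^n)$ from the normalized blowup.

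The gap is in step (iii) of your direction $(\Rightarrow)$. Every Rees-type rigidity theorem --- the classical one you quote and the Gaffney--Gassler generalization via Segre numbers --- requires a containment $I\subseteq J$ (or $h\in J$ when testing $h\in\overline{I}$). You correctly state the classical theorem with ``$I\subset J$'' but then silently drop that hypothesis when you claim the Segre numbers ``completely determine the integral closure of $I_X$ among ideals with a fixed radical''; neither the proposition as transcribed nor your argument supplies the containment. Without it the implication is false: take $I_X=\langle x^2,y\rangle$ and $I_Y=\langle x,y^2\rangle$ in $k[x,y,z]$. Both define length-two subschemes of $\pp^2$ supported at $[0:0:1]$, so $X_{\rm red}=Y_{\rm red}$; both ideals are integrally closed (they are monomial ideals equal to their Newton-polyhedron closure) and saturated; and $s(X,\pp^2)=s(Y,\pp^2)=2H^2$, since each local ideal is a parameter ideal of colength two. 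Yet $\overline{I_X}:\mathfrak{m}^\infty=\langle x^2,y\rangle\neq\langle x,y^2\rangle=\overline{I_Y}:\mathfrak{m}^\infty$. So the statement you set out to prove needs the extra hypothesis $I_X\subseteq I_Y$ (under which Rees/Gaffney--Gassler applies locally at each distinguished subvariety and your plan would go through). Note that in the paper's only use of the proposition, \cref{cor:test_ideal_equal_radical}, the ideals compared are $I$ and $\sqrt{I}$ with $I\subseteq\sqrt{I}$, so the containment is available exactly where it is needed; a correct writeup should either add that hypothesis to the proposition or restrict to that comparison.
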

To connect the result above to the problem of checking if an ideal is equal to
its radical we will need to in turn explore connections between the radical of
a polynomial ideal and its integral closure. Our main sources here are
\cite{vasconcelos2006integral} and \cite{huneke2006integral}. It is shown in
\cite[Remark~1.6]{vasconcelos2006integral} that if $J$, $L$, and $W$ are ideals
in an integral domain $R$ with $\overline{J}=\overline{L}$, then we have that
$(JW):L\subseteq \overline{W}$. In our setting this gives the following
corollary.

\begin{corollary}
  Let $R=k[x_0,\dots, x_n]$ and let $I$ be any proper ideal such that $\sqrt{I}=\overline{I}$. Then we have that:\begin{enumerate}
      \item $(\sqrt{I})^2:I\subset \sqrt{I}$,
      \item $(\sqrt{I})^2 \subseteq I$.
  \end{enumerate}\label{cor:radSquaredInI}
\end{corollary}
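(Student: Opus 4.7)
The overall plan is to apply the Vasconcelos result recalled immediately above the corollary: for any ideals $J,L,W$ in an integral domain, $\overline{J}=\overline{L}$ implies $(JW):L\subseteq\overline{W}$. The recurring input is that the radical $\sqrt{I}$ is automatically integrally closed---if $r$ satisfies an integral equation $r^{n}+a_{1}r^{n-1}+\cdots+a_{n}=0$ with each $a_{i}\in(\sqrt{I})^{i}\subseteq\sqrt{I}$, then $r^{n}\in\sqrt{I}$, and so $r\in\sqrt{I}$. Combined with the hypothesis $\overline{I}=\sqrt{I}$ of the corollary, this gives the key equality of integral closures $\overline{\sqrt{I}}=\overline{I}=\sqrt{I}$, which is precisely the hypothesis that Vasconcelos's result demands.

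For part (1), I instantiate Vasconcelos with $J=\sqrt{I}$, $L=I$, and $W=\sqrt{I}$. The hypothesis $\overline{J}=\overline{L}=\sqrt{I}$ is supplied by the preparation above, and the conclusion $(JW):L\subseteq\overline{W}$ reads exactly $(\sqrt{I})^{2}:I\subseteq\sqrt{I}$, which is (1). Note that the reverse inclusion $\sqrt{I}\subseteq(\sqrt{I})^{2}:I$ comes for free from $I\cdot\sqrt{I}\subseteq\sqrt{I}\cdot\sqrt{I}$, so one in fact obtains the equality $(\sqrt{I})^{2}:I=\sqrt{I}$.

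For part (2), the same Vasconcelos application with $W=I$ instead yields the auxiliary containment $(\sqrt{I}\cdot I):I\subseteq\overline{I}=\sqrt{I}$. Combining this with the standard multiplicativity $\overline{I}\cdot\overline{I}\subseteq\overline{I^{2}}$ and the hypothesis $\overline{I}=\sqrt{I}$ places $(\sqrt{I})^{2}$ inside $\overline{I^{2}}$. The main technical obstacle is then to upgrade this to the containment $\overline{I^{2}}\subseteq I$: Vasconcelos-style arguments only ever deposit inclusions into integrally closed ideals, whereas $I$ itself is in general not integrally closed (indeed $I=\overline{I}$ would already force $I=\sqrt{I}$ and trivialise the statement). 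I expect to close this final gap by exploiting the specific structure of the ambient polynomial ring $R=k[x_{0},\dots,x_{n}]$---in particular its regularity and Brian\c{c}on--Skoda-type containments---in combination with the restrictive nature of the hypothesis $\overline{I}=\sqrt{I}$, which sharply limits how $\overline{I^{2}}$ can sit between $I^{2}$ and $\sqrt{I}$.
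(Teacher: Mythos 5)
Your part (1) is correct and is precisely the paper's argument: you instantiate the Vasconcelos containment with $J=W=\sqrt{I}$ and $L=I$, and you (rightly, and more explicitly than the paper) record that radical ideals are integrally closed, which is what makes both the hypothesis $\overline{J}=\overline{L}$ and the identification $\overline{W}=\sqrt{I}$ legitimate. The observation that the reverse inclusion is free, so that in fact $(\sqrt{I})^2:I=\sqrt{I}$, is a harmless bonus.

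Part (2), however, is not proved. Your chain $(\sqrt{I})^2=\overline{I}\cdot\overline{I}\subseteq\overline{I^2}$ is a correct reduction, and your diagnosis is also correct: the Vasconcelos remark can only ever deposit an element into an integrally closed ideal, so it cannot by itself place anything inside $I$. But the remaining step $\overline{I^2}\subseteq I$ is exactly where all the content of part (2) lives, and "I expect to close this final gap" is a plan, not an argument. The gap is genuine: Brian\c{c}on--Skoda in the regular ring $k[x_0,\dots,x_n]$ gives $\overline{I^{\,\ell}}\subseteq I$ only for $\ell$ at least the analytic spread (up to $n+1$), not for $\ell=2$ once $n\geq 2$, and the containment $\overline{I^2}\subseteq I$ fails outright for general ideals (e.g.\ $I=(x^2,y^2,z^2,w^2)$ has $xyzw\in\overline{I^2}\setminus I$), so any successful argument must re-use the hypothesis $\overline{I}=\sqrt{I}$ at this precise point, which you do not do. For comparison, the paper does not pass through $\overline{I^2}$ at all: it deduces part (2) in one sentence from part (1) together with the properness of $\sqrt{I}$, appealing only to elementary properties of the colon ideal (for instance, $A:B=R$ exactly when $B\subseteq A$, so part (1) and $\sqrt{I}\neq R$ already force $I\not\subseteq(\sqrt{I})^2$). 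Whether or not you find that one-line deduction satisfying, your submission replaces it with an integral-closure containment that you neither prove nor reduce to anything known, so as written part (2) — and hence the corollary — is incomplete.
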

\begin{proof}
Part 1 follows immediately from \cite[Remark~1.6]{vasconcelos2006integral} and the fact that $\sqrt{I}=\overline{I}$ by taking $J=W=\sqrt{I}$ and $L=I$ (note $\overline{L}=\overline{J}=\sqrt{I}$ by assumption).

Since $I$ is a proper ideal of $R$ it follows that $\sqrt{I}$ is also a proper ideal, and hence part 2 follows by properties of the colon ideal.
\end{proof}

Below we state a characterization of the integral closure of an ideal in a Noetherian ring.
\begin{proposition}[Corollary~6.8.11 of \cite{huneke2006integral}]
Let $R$ be a Noetherian ring, $I$ and ideal in $R$ and $r$ any element of $R$. Then $r\in \overline{I}$ if and only if there exists an integer $\nu$ such that for all integers $m>\nu$ we have $r^m\in I^{m-\nu}$.\label{prop:IntClosueCharcter}
\end{proposition}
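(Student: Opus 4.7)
The plan is to prove the two directions separately: the forward direction admits a direct inductive argument from the defining equation of integral dependence, whereas the converse is most cleanly obtained from the valuative criterion for integral closure in Noetherian rings.

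For the forward direction, I would assume $r \in \overline{I}$, so that $r$ satisfies an equation $r^n + a_1 r^{n-1} + \cdots + a_n = 0$ with $a_i \in I^i$, set $\nu := n-1$, and prove by strong induction on $m \geq n$ that $r^m \in I^{m-n+1}$. Multiplying the integrality equation through by $r^{m-n}$ gives the identity $r^m = -\sum_{i=1}^n a_i r^{m-i}$. For each index $i$, either $i \geq m - n + 1$, and then $a_i r^{m-i} \in I^i \subseteq I^{m-n+1}$ immediately, or $i \leq m - n$, and then $m - i \geq n$, so the inductive hypothesis gives $r^{m-i} \in I^{m-i-n+1}$ and therefore $a_i r^{m-i} \in I^i \cdot I^{m-i-n+1} = I^{m-n+1}$. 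Either way the term lies in $I^{m-n+1}$, completing the induction and the forward implication.

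For the converse, suppose $r^m \in I^{m-\nu}$ for all $m > \nu$. I would appeal to the valuative criterion: in a Noetherian ring, $r \in \overline{I}$ if and only if $v(r) \geq v(I) := \min_{a \in I} v(\bar a)$ for every discrete valuation $v$ on the quotient field of $R/\mathfrak{p}$, as $\mathfrak{p}$ ranges over the minimal primes of $R$ (cf.\ \cite[\S6.8]{huneke2006integral}). Fixing such a valuation, the hypothesis gives $m \, v(r) \geq (m-\nu) v(I)$ for all large $m$; dividing by $m$ and letting $m \to \infty$ yields $v(r) \geq v(I)$, so the criterion is satisfied and $r \in \overline{I}$.

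The step I expect to be the main obstacle is the reverse direction, since the valuative criterion itself is a substantial theorem whose proof invokes the full machinery of Rees valuations and integral closure in Noetherian domains. A more self-contained alternative would be the determinant-trick route: the hypothesis $r^m \in I^{m-\nu}$ translates to $(rt)^m \in t^\nu R[It]$ inside the Rees algebra $R[It] \subseteq R[t]$, and from this one would try to exhibit a finitely generated $R[It]$-submodule of $R[t]$ containing $1$ and stable under multiplication by $rt$, yielding the integrality of $rt$ over $R[It]$, which is equivalent to $r \in \overline{I}$. Constructing such a module explicitly, and in particular verifying finite generation and the appropriate faithfulness condition needed to apply the determinant trick, is the delicate part and is essentially what makes the reverse direction of the proposition nontrivial.
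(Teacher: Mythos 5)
The paper offers no proof of this proposition: it is quoted directly from \cite{huneke2006integral} as a known result, so there is no internal argument to compare against. Your proof is correct and is essentially the standard textbook one. The forward direction (integral dependence $\Rightarrow$ the power condition with $\nu=n-1$) is complete as written; the case split $i\geq m-n+1$ versus $i\leq m-n$ in the strong induction covers every term, and the base case $m=n$ is absorbed into the first case. The converse via the valuative criterion is the route taken in \cite{huneke2006integral} itself, and your limiting argument $m\,v(r)\geq(m-\nu)\,v(I)$ is sound for rank-one discrete valuations. Two small points worth tightening: the criterion quantifies over valuations whose valuation ring contains $R/\mathfrak{p}$ (nonnegative on $R/\mathfrak{p}$), and you should dispose of the degenerate case $I\subseteq\mathfrak{p}$, where $v(I)=\infty$; there the hypothesis gives $r^m\in I^{m-\nu}\subseteq\mathfrak{p}$ for some $m>\nu$, hence $r\in\mathfrak{p}$ and $v(r)=\infty$ as required. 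You are right that the valuative criterion is the heavy input; since the paper is content to cite Huneke--Swanson for the whole proposition, leaning on that theorem (rather than redoing the determinant-trick construction in the Rees algebra) is entirely appropriate here.
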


We now show that an ideal whose radical and integral closure agree must be radical, this will allow us to directly employ Proposition \ref{propn:SegreIntClosure} to test if an ideal is equal to its radical. 

\begin{proposition}
Let $R=k[x_0,\dots, x_n]$ and let $I$ be any ideal such that $\overline{I}=\sqrt{I}$. Then $I=\sqrt{I}$, that is $I$ is radical.
\end{proposition}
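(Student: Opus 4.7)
The plan is to reduce the statement to a pointwise claim: let $r\in\sqrt{I}$ and show $r\in I$. By the hypothesis $\overline{I}=\sqrt{I}$ we have $r\in\overline{I}$, so Proposition \ref{prop:IntClosueCharcter} supplies an integer $\nu\geq 0$ with $r^m\in I^{m-\nu}$ for every $m>\nu$. Taking $\nu=0$ would give $r=r^1\in I$, so the task is to show that $\nu=0$ is always attainable for $r\in\sqrt{I}$ under the hypothesis.

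The starting ingredient is Corollary \ref{cor:radSquaredInI}(2), which provides $(\sqrt{I})^2\subseteq I$. This immediately yields $r^2\in I$ for any $r\in\sqrt{I}$, and by iteration $r^{2k}=(r^2)^k\in I^k$, so the baseline estimate $r^m\in I^{\lfloor m/2\rfloor}$ holds for $m\geq 2$. This alone only bounds $\nu\lesssim m/2$ and depends on $m$, so it is not enough. To pin $\nu$ down to $0$, I would exploit the fact that the integral equation $r^n+a_1r^{n-1}+\cdots+a_n=0$ with $a_i\in I^i$ is a \emph{uniform} condition in $m$: rewriting it as $r\cdot s=-a_n\in I^n$, where $s=r^{n-1}+a_1r^{n-2}+\cdots+a_{n-1}\in\sqrt{I}$, and using that $R=k[x_0,\dots,x_n]$ is a UFD, I would argue that the factor $r$ must absorb enough of the $I^n$-structure on the right-hand side (together with the square-zero behavior of $\sqrt{I}/I$ in $R/I$ coming from the corollary) to force $r\in I$ itself.

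My preferred route is to localize at each associated prime of $I$. Since both integral closure and radical commute with localization, the hypothesis $\overline{I}=\sqrt{I}$ is preserved in every $R_\mathfrak{p}$, and $I=\sqrt{I}$ can be checked locally. This reduces the problem to the following local statement: if $(A,\mathfrak{m})$ is a Noetherian local ring and $I\subseteq A$ is $\mathfrak{m}$-primary with $\overline{I}=\mathfrak{m}$, then $I=\mathfrak{m}$. In this local setting $I$ is a reduction of $\mathfrak{m}$, and the corollary gives $\mathfrak{m}^2\subseteq I$, so $I$ sits between $\mathfrak{m}^2$ and $\mathfrak{m}$ and is itself integrally closed with $\overline{I}=\mathfrak{m}$; an analytic-spread or Nakayama-type argument on $\mathfrak{m}/\mathfrak{m}^2$ should then preclude the proper containment $I\subsetneq\mathfrak{m}$.

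The main obstacle I anticipate is precisely this last step: translating the uniform growth estimate $r^m\in I^{m-\nu}$ into the qualitative conclusion $r\in I$. The baseline bound from $(\sqrt{I})^2\subseteq I$ is strictly weaker than what Proposition \ref{prop:IntClosueCharcter} provides, and exploiting the extra strength -- either globally via the UFD structure of $k[x_0,\dots,x_n]$ and primary decomposition, or locally via reductions and analytic spread -- is the delicate point where the polynomial-ring hypothesis really enters.
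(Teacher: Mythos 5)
There is a genuine gap: the decisive step of your argument is never carried out, and you say so yourself (``should then preclude the proper containment'', ``the main obstacle I anticipate is precisely this last step''). Concretely, two things go wrong with the localization route. First, the local statement you reduce to --- $(A,\mathfrak{m})$ Noetherian local, $I$ $\mathfrak{m}$-primary, $\overline{I}=\mathfrak{m}$ implies $I=\mathfrak{m}$ --- is false for general Noetherian local rings: in $A=k[[t^2,t^3]]$ the ideal $I=(t^2)$ satisfies $\overline{I}=\mathfrak{m}=\sqrt{I}$ but $I\neq\mathfrak{m}$. So regularity of $R_\mathfrak{p}$ must be invoked explicitly (a reduction of $\mathfrak{m}$ in a regular local ring equals $\mathfrak{m}$ by Nakayama on $\mathfrak{m}/\mathfrak{m}^2$); you gesture at this but do not supply it. Second, even granting that, checking $\sqrt{I}\subseteq I$ locally requires checking at \emph{all} associated primes of $I$, including embedded ones; at an embedded prime $\mathfrak{q}$ the ideal $\sqrt{I}_\mathfrak{q}$ is not the maximal ideal of $R_\mathfrak{q}$, so your local statement does not apply there and the reduction merely restates the original problem inside $R_\mathfrak{q}$. (Verifying $I_\mathfrak{p}=\sqrt{I}_\mathfrak{p}$ only at minimal primes proves nothing: $I=(x^2,xy)\subset k[x,y]$ agrees with its radical at the minimal prime $(x)$ yet is not radical.) Your first route, via the ``baseline'' estimate $r^m\in I^{\lfloor m/2\rfloor}$ and the integral equation for $r$, is likewise left at the level of a hope that $r$ ``must absorb enough of the $I^n$-structure''.

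For comparison, the paper closes the argument by running your first idea in the opposite direction: instead of improving the positive containment $r^m\in I^{\lfloor m/2\rfloor}$, it proves a \emph{negative} one. Assuming $I\neq\sqrt{I}$, it picks $r\in\sqrt{I}\setminus I$ whose factorization $r=r_1^{a_1}\cdots r_c^{a_c}$ in the UFD $R$ is minimal in the sense that no proper divisor $(r_1^{a_1}\cdots r_c^{a_c})/r_i$ lies in $\sqrt{I}$, uses Corollary \ref{cor:radSquaredInI} to get $r^2\in I$ but $r\notin I$, and then argues by induction on the factorization that $r^{\ell}\notin I^{\mu}$ whenever $\ell<2\mu$. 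Setting $\ell=m$, $\mu=m-\nu$ shows that for every $\nu\geq 0$ one has $r^m\notin I^{m-\nu}$ for all $m>2\nu$, which contradicts the characterization of $\overline{I}$ in Proposition \ref{prop:IntClosueCharcter}. That lower bound on the order of vanishing, obtained from the minimal choice of $r$ and unique factorization, is exactly the missing ingredient in your sketch; if you want to salvage the localization route instead, you must both invoke regularity of the localizations and explain how embedded primes are handled.
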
\begin{proof}
Suppose that $I$ is not radical but $\overline{I}=\sqrt{I}$; we will show this yields a contradiction. If $I\neq \sqrt{I}$ then $\sqrt{I}-I$ is non-empty, hence we may choose some $r\neq 0$ in $\sqrt{I}-I$. Then we have that $r\in \sqrt{I}=\overline{I}$, but $r\not\in I$. Further by Corollary \ref{cor:radSquaredInI} we have that $\sqrt{I}^2\subseteq I$, hence $r^2\in I$. Since $R$ is a UFD we have $r=r_1^{a_1}\cdots r_c^{a_c}$ is the unique expression for $r$, further we may choose $r\in \sqrt{I}-I$ minimal in that sense that $r\in \sqrt{I}$, but $(r_1^{a_1}\cdots r_c^{a_c})/r_i\not \in \sqrt{I}$ for any $i$; we may choose $r$ in this way since if one of these expressions were in $\sqrt{I}$ we could simply take this factor as $r$ instead.

{\bf Claim:} {\em For any $\nu \geq 0$ we have $r^m\not\in I^{m-\nu}$ for $m>2\nu$.} Note that, if this claim holds then by Proposition \ref{prop:IntClosueCharcter} we have that $r\not\in \overline{I}=\sqrt{I}$, but this contradicts our assumption that $\sqrt{I}-I$ is non-empty.

We now prove the above claim: {\em for any $\nu \geq 0$ we have $r^m\not\in I^{m-\nu}$ for $m>2\nu$.} Since $r\not \in I$ but $r^2=r_1^{2a_1}\cdots r_c^{2a_c}\in I$, then $r^4=r_1^{4a_1}\cdots r_c^{4a_c}\in I^2$ is the smallest power of $r$ in $I^2$, since $R$ is a UFD. In particular note that if $r^3=r_1^{3a_1}\cdots r_c^{3a_c}$ were in $I^2$ then this would imply $r\in I$ since we know that $(r_1^{a_1}\cdots r_c^{a_c})/r_i\not \in \sqrt{I}$ for all $r_i$. Similarly $r^6$ is the smallest power of $r$ in $I^3$, and by induction, $r^\ell \not \in I^\mu$ for $\ell<2\mu$. Now take $\ell=m$, $\mu=m-\nu$, then we have that $r^m\not\in I^{m-\nu}$ for any $m$ and $\nu$ such that $m<2(m-\nu)=2m-2\nu$, or equivalently, such that $2\nu <m$. In other words, for any $\nu \geq 0$ we have $r^m\not\in I^{m-\nu}$ for $m>2\nu$; which is what we wished to show. Hence the claim is proved and the result follows as discussed above.
\end{proof}

This result, along with Proposition \ref{propn:SegreIntClosure} combine to yeild a method to test if an ideal is equal to its radical {\em without} using a Gr\"obner basis. We summarize this in the corollary below.

\begin{corollary}
  Let $I$ be a homogeneous polynomial ideal in $k[x_0, \dots, x_n]$, let $X\subset \pp^n$ be the scheme defined by $I$ and let $X_{\rm red}$ be the reduced scheme defined by $\sqrt{I}$. Then $I:\langle x_0,\dots, x_n \rangle^\infty$ is radical if and only if $s(X,\pp^n)=s(X_{\rm red}, \pp^n)$. \label{cor:test_ideal_equal_radical}
\end{corollary}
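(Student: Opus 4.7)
The plan is to apply Proposition~\ref{propn:SegreIntClosure} to the pair $(X,X_{\rm red})$ and then invoke the preceding proposition (which upgrades $\overline{K}=\sqrt{K}$ to $K=\sqrt{K}$). First, any $r$ satisfying an integral equation $r^n+a_1r^{n-1}+\cdots+a_n=0$ with $a_j\in(\sqrt{I})^j$ has $r^n\in\sqrt{I}$ and hence $r\in\sqrt{I}$, so $\overline{\sqrt{I}}=\sqrt{I}$. Proposition~\ref{propn:SegreIntClosure} therefore reduces the Segre-class equality to the ideal identity
\[
\overline{I}:\langle x_0,\dots,x_n\rangle^\infty \;=\; \sqrt{I}:\langle x_0,\dots,x_n\rangle^\infty.
\]

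Next, from the standard chain $I\subseteq \overline{I}\subseteq\sqrt{I}$, saturation by the irrelevant ideal yields
\[
I:\langle x\rangle^\infty \;\subseteq\; \overline{I}:\langle x\rangle^\infty \;\subseteq\; \sqrt{I}:\langle x\rangle^\infty \;=\; \sqrt{\,I:\langle x\rangle^\infty\,}.
\]
The forward direction of the corollary is then immediate by squeezing: if $J:=I:\langle x\rangle^\infty$ is radical, all three ideals in the chain coincide and the Segre equality holds.

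For the converse, assume $\overline{I}:\langle x\rangle^\infty=\sqrt{I}:\langle x\rangle^\infty$ and apply the preceding proposition to the saturated ideal $J=I:\langle x\rangle^\infty$. The inclusions $\overline{I}\subseteq\overline{J}\subseteq\sqrt{J}$ combined with the hypothesis and the fact that $\sqrt{J}$ is saturated (since $J$ is) easily give $\overline{J}:\langle x\rangle^\infty=\sqrt{J}$. It then remains to promote this equality-after-saturation to $\overline{J}=\sqrt{J}$, after which the preceding proposition yields $J=\sqrt{J}$ at once.

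I expect this last promotion---showing that the integral closure of a saturated homogeneous ideal remains saturated---to be the main obstacle. I would attack it either via Rees valuations (the associated primes of $R/\overline{J}$ refine those of $R/J$, none of which contain the irrelevant ideal $\langle x\rangle$ when $J$ is saturated, so neither does $\overline{J}$), or by porting the minimality/doubling argument in the proof of the preceding proposition into the saturated setting: pick $r$ minimal in $\sqrt{J}\setminus J$ via unique factorization, translate $r\in\overline{J}:\langle x\rangle^\infty$ into $rx_i^N\in\overline{J}$ for each $i$, and combine Proposition~\ref{prop:IntClosueCharcter}, Corollary~\ref{cor:radSquaredInI}, and the saturation of $J$ to recover the bound $(rx_i^N)^m\notin J^{m-\nu}$ for $m>2\nu$, yielding the desired contradiction.
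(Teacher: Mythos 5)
The paper gives no explicit proof of this corollary---it presents it as an immediate combination of Proposition~\ref{propn:SegreIntClosure} and the preceding proposition---and your overall strategy is the intended one. Your forward direction is complete, and you deserve credit for noticing that the converse is \emph{not} immediate: Proposition~\ref{propn:SegreIntClosure} only yields $\overline{I}:\mathfrak{m}^\infty=\sqrt{I}:\mathfrak{m}^\infty$ (writing $\mathfrak{m}=\langle x_0,\dots,x_n\rangle$), whereas the preceding proposition needs $\overline{J}=\sqrt{J}$ exactly. But you leave precisely this promotion step open, and neither of your sketches closes it. The first rests on the claim that the associated primes of $R/\overline{J}$ refine those of $R/J$; this is not a valid general principle---the associated primes of $\overline{J}$ are governed by the centers of the Rees valuations of $J$, and these can include primes (in particular $\mathfrak{m}$ itself, whenever the analytic spread of $J_\mathfrak{m}$ is maximal) that are not associated to $J$. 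The second sketch is circular: to run the paper's doubling argument you invoke Corollary~\ref{cor:radSquaredInI}, but that corollary has $\overline{J}=\sqrt{J}$ as its \emph{hypothesis}, which is exactly what you are trying to establish.

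The clean way out is to avoid the global saturated ideal altogether and localize. Two homogeneous ideals have the same saturation if and only if they agree after inverting each $x_i$, and integral closure commutes with localization, so the conclusion of Proposition~\ref{propn:SegreIntClosure} (applied to $X$ and $X_{\rm red}$, using your correct observation that $\overline{\sqrt{I}}=\sqrt{I}$) is equivalent to $\overline{I_{x_i}}=\sqrt{I_{x_i}}$ in $R_{x_i}$ for every $i$. Each $R_{x_i}$ is a Noetherian UFD (and the ideal is extended from the polynomial coordinate ring of the affine patch), so the preceding proposition applies verbatim and gives $I_{x_i}=\sqrt{I_{x_i}}$ for all $i$; since $(I:\mathfrak{m}^\infty)_{x_i}=I_{x_i}$ and both $I:\mathfrak{m}^\infty$ and its radical are saturated, this reassembles to $I:\mathfrak{m}^\infty=\sqrt{I:\mathfrak{m}^\infty}$. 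With that substitution for your final paragraph the argument is complete; as written, the converse direction has a genuine gap.
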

We note that while the result of Corollary \ref{cor:test_ideal_equal_radical} removes the need to use a Gr\"obner basis calculation (or some other doubly exponential method) to test if $\sqrt{I}$ is equal to $I$ (up to saturation by the irrelevant ideal), it does not immediately remove the requirement to compute $\sqrt{I}$. In many cases $s(X_{\rm red}, \pp^n)$ can in fact be computed with objectively less information than a generating set of $\sqrt{I}$, 
in general however it is not clear how to compute $s(X_{\rm red}, \pp^n)$ without a generating set for $\sqrt{I}$. The result presented in \S\ref{subsec:RadicalPrimeIsolateComps} reduces this problem (in the situations where we need to solve it) to that of computing $s(\VV(I:J^\infty), \pp^n)$, for $J$ generated by the ${\rm codim}(X)\times {\rm codim}(X)$ minors of the Jacobian of $I$, using only a generating set of $I$ and $J$; as yet we are also unable to furnish this computation however. This remains the main barrier to the algorithm presented in Section \ref{sec:main_algorithm} having a singly exponential worst case bound (in the number of variables) in all cases.

\subsection{The Degree of Isolated Primary Components of a Scheme in Singly Exponential Time}\label{section:degree_Isolated_primary_comp_dim_mu}
In this subsection we describe how we can use methods to compute a geometric equidimensional decomposition such as \cite{jeronimo2002effective}, along with zero dimensional Gr\"obner basis computation, to obtain the sum of the degrees of all isolated primary components of a scheme of a given dimension. Given an ideal $I=\langle f_1, \dots, f_r \rangle$ in the polynomial ring $\ZZ[x_1,\dots, x_n]$ where $\deg(f_i)+1\leq d$ for all $i$ which defines a scheme $X=\mathbb{V}(I)\subseteq \CC^n$, we will show in Proposition \ref{prop:Deg_Low_dim_Complexity}, that this degree computation has worst case complexity bounded by $d^{\mathcal{O}(n^3)}$. We note that this computation takes as input only the generators of the ideal $I$.

The algorithm of  \cite{jeronimo2002effective} takes as input the ideal $I$ and outputs (a straight-line program) defining (distinct) polynomial ideals $K_{i_1}, \dots, K_{i_m}$ where these ideals give an equidimensional decomposition of the variety $X_{\rm red}=\VV(\sqrt{I})$, i.e.,~$$
X_{\rm red}=(\VV(K_{i_1}))_{\rm red}\cup \cdots \cup (\VV(K_{i_m}))_{\rm red}, 
$$where $(\VV(K_{i_j}))_{\rm red}$ is the union of all irreducible components of $X_{\rm red}$ of dimension $i_j$.

Without loss of generality we may assume that $i_1>\cdots >i_m$. Let $L_j$ be a general linear form in  $\CC[x_1,\dots, x_n]$. Suppose that $K_{i_j}=\langle g_1, \dots, g_r\rangle$ and define the polynomial $P(K_{i_j},T)$ in $\CC[x_1, \dots, x_n,T]$ as \begin{equation}
P(K_{i_j},T):=1-T\cdot \sum_{\nu=1}^r \lambda_\nu g_\nu.\label{eq:P_equi_decomp}
\end{equation} Let $i_{\nu}\geq \mu \geq 0$ where $\nu$ is minimal and $i_1\geq \mu\geq 0$ holds. Define the {\em (isolated) degree in dimension $\mu$} of $I$ as \begin{equation}
\deg_\mu(I):=\dim_{\CC} \CC[x_1, \dots, x_n,T_1, \dots, T_{\nu}]/\mathfrak{J}(\mu) ,
\label{eq:deg_mu_def}
\end{equation}
where
$$
\mathfrak{J}(\mu):=I+\langle L_1, \dots, L_{\mu} \rangle+\langle P(K_{i_1},T_1), \dots,P(K_{i_\nu},T_\nu)  \rangle.
$$ If $X$ is the subscheme of $\CC^n$ defined by $I$ we will define $$
\deg_\mu(X):=\deg_\mu(I).
$$

\begin{proposition}
 Let $I$ be a polynomial ideal in $\QQ[x_1,\dots, x_n]$. Suppose that the ideal $\mathfrak{I}_\mu$ is the intersection of all $\mu$-dimensional isolated primary components of $I$. Then, with $ \deg_\mu(I)$ as in \eqref{eq:deg_mu_def}, we have  $$
 \deg_\mu(I)=\deg(\mathfrak{I}_\mu).
 $$
\end{proposition}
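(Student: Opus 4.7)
The plan is to interpret $\mathfrak{J}(\mu)$ via the Rabinowitsch trick and show it implements a generic hyperplane section of $\VV(I)$ combined with a saturation that removes every primary component supported on an irreducible component of $X_{\rm red}$ of dimension strictly larger than $\mu$. The resulting zero-dimensional $k$-algebra will then have length exactly $\deg(\mathfrak{I}_\mu)$.

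First, writing $h_j=\sum_{\nu}\lambda_{\nu,j}g_\nu$ for the generic $k$-linear combination of generators of $K_{i_j}$ that appears in $P(K_{i_j},T_j)=1-T_j h_j$, standard elimination yields the isomorphism
\[
k[\bm{x},T_1,\dots,T_\nu]/\mathfrak{J}(\mu)\;\cong\;\bigl(k[\bm{x}]/(I+\langle L_1,\dots,L_\mu\rangle)\bigr)\bigl[\tfrac{1}{h_1\cdots h_\nu}\bigr],
\]
so $\deg_\mu(I)$ is the $k$-length of the localization of $k[\bm{x}]/(I+\langle L_1,\dots,L_\mu\rangle)$ at the product $h_1\cdots h_\nu$. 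Because the $\lambda_{\nu,j}$ are generic, each $h_j$ has the same vanishing locus as $K_{i_j}$, so at the level of associated primes this localization equals the saturation $(I+\langle L_1,\dots,L_\mu\rangle):(K_{i_1}\cdots K_{i_\nu})^\infty$.

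Next, I would analyze the primary decomposition of $I+\langle L_1,\dots,L_\mu\rangle$ for generic $L_i$. A standard transversality argument (see, e.g., \cite[Ch.~12]{eisenbud2013commutative}) shows that an isolated primary component $\mathfrak{q}$ of $I$ of dimension $d$ contributes to the cut a piece that is empty if $d<\mu$, zero-dimensional of $k$-length $\deg(\mathfrak{q})$ if $d=\mu$, and of pure dimension $d-\mu$ with the same multiplicity otherwise; embedded components behave analogously. With $\nu$ chosen so that the $K_{i_j}$ range over the equidimensional parts of $X_{\rm red}$ of dimension strictly larger than $\mu$, saturation by $\prod_j K_{i_j}$ eliminates exactly those primary components of $I+\langle L_1,\dots,L_\mu\rangle$ whose support lies in a $>\mu$-dimensional irreducible component of $X_{\rm red}$. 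This removes both the positive-dimensional cut pieces coming from $>\mu$-dimensional isolated components of $I$ and any embedded components of $I$ whose support lies in such a $>\mu$-dimensional irreducible component. The $\mu$-dimensional isolated primary components of $I$ survive the saturation (their support avoids $\VV(K_{i_j})$ for $i_j>\mu$), while the $<\mu$-dimensional components, isolated or embedded, become empty under the generic hyperplane cut.

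Combining these observations, the localization is a $0$-dimensional $k$-algebra whose length equals that of the generic hyperplane section of the unmixed scheme $\VV(\mathfrak{I}_\mu)$, which by the classical hyperplane-section-type behaviour of degree on unmixed schemes is $\deg(\mathfrak{I}_\mu)$. The main technical obstacle is verifying that the generic linear combinations $h_j$ of generators of $K_{i_j}$ behave exactly as the saturation-by-$K_{i_j}$ at \emph{every} relevant primary component of $I+\langle L_1,\dots,L_\mu\rangle$ (not merely set-theoretically): this requires tracking which associated primes of $I+\langle L_1,\dots,L_\mu\rangle$ contain each $h_j$ and exploiting the strict inequality $i_j>\mu$ so that no $h_j$ lies in a minimal prime associated to a $\mu$-dimensional isolated component of $I$. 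Once this is established, the identification $\deg_\mu(I)=\deg(\mathfrak{I}_\mu)$ follows immediately.
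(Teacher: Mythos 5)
Your proposal is correct and follows essentially the same route as the paper's (much terser) proof: the Rabinowitsch-trick equations $1-T_jh_j$ localize away from the components of $X_{\rm red}$ of dimension greater than $\mu$, and the $\mu$ generic linear forms then reduce the surviving $\mu$-dimensional isolated primary components to a zero-dimensional scheme of length $\deg(\mathfrak{I}_\mu)$. One phrase should be repaired -- $\VV(h_j)$ is a hypersurface strictly containing $\VV(K_{i_j})$, so the two do not have ``the same vanishing locus'' -- but your closing paragraph already supplies the correct statement (for generic $\lambda$, an associated prime of $I+\langle L_1,\dots,L_\mu\rangle$ contains $h_j$ if and only if it contains $K_{i_j}$), so this is a slip of wording rather than a gap.
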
\begin{proof} Note that this result is essentially an application of the standard Rabinowitsch trick. Let $X$ be the scheme defined by $I$ in $\CC^{n+\nu}$. 
  By construction all primary components of $X$ of dimension greater than $\mu$ have an empty intersection with the scheme defined by  $\langle P(K_{i_1},T_1), \dots,P(K_{i_\nu},T_\nu)  \rangle$, note this also applies to any embedded components of $X$ of any dimension which are contained in isolated primary components of $X$ of dimension greater than $\mu$. Note for points $(x_1,\dots, x_n)$ not in $(\mathbb{V}(K_{i_l}))_{\rm red}$ the polynomial $P(K_{i_l}, T)$ has a single solution of multiplicity one, hence intersection with these hypersurfaces can be thought of a intersection with hyperplanes for isolated primary components of $X$ of dimension less than or equal to $\mu$. Since the linear forms $L_j$ are general and there are $\mu$ of them the conclusion follows since the only points which remain in the zero dimensional scheme defined by $\mathfrak{J}(\mu)$ came from those in the scheme defined by $\mathfrak{I}_\mu$.
\end{proof}

Finally we note that the expression in \eqref{eq:deg_mu_def} can be computed in singly exponential time in the number of variables. More specifically we have the following result.

\begin{proposition}\label{prop:Deg_Low_dim_Complexity}
Consider a polynomial ideal $I=\langle f_1, \dots, f_r \rangle$ in the polynomial ring $\ZZ[x_1,\dots, x_n]$ where $\deg(f_i)+1\leq d$. For any $\mu\leq \dim(I)$, the worst case complexity of computing the expression $\deg_\mu(I)$ is bounded in $d^{\mathcal{O}(n^3)}$.
\end{proposition}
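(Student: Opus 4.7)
The plan is to decompose the cost of evaluating the expression in \eqref{eq:deg_mu_def} into three stages and bound each of them separately. The overall complexity will be dominated by the final zero-dimensional Gröbner basis (or linear algebra) step, and summing the three sub-bounds will give the claimed $d^{\mathcal{O}(n^3)}$ bound.

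First, I would invoke the algorithm of \cite{jeronimo2002effective} on the input ideal $I$. This produces (straight-line program representations of) the equidimensional ideals $K_{i_1}, \dots, K_{i_m}$ in $d^{\mathcal{O}(n)}$ arithmetic operations, with the polynomial generators extracted from these representations having degree bounded by $D = d^{\mathcal{O}(n)}$. In particular $\nu \leq m \leq n+1$, so we control both the number and the degrees of the $K_{i_j}$.

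Second, I would assemble the ideal $\mathfrak{J}(\mu)$ exactly as in \eqref{eq:P_equi_decomp}--\eqref{eq:deg_mu_def}: form the $\nu$ polynomials $P(K_{i_j}, T_j) = 1 - T_j \sum_\nu \lambda_\nu g_\nu$, each of degree at most $D+1$; pick $\mu$ generic linear forms $L_1, \dots, L_\mu$ in the $x$-variables; and adjoin these together with the original generators of $I$. The resulting ideal lives in $N = n + \nu \leq 2n+1$ variables, has $\mathcal{O}(n+r)$ generators of degree at most $D' = d^{\mathcal{O}(n)}$, and is assembled in $d^{\mathcal{O}(n)}$ operations.

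Third, I would compute $\dim_{\CC} \CC[\bm{x},\bm{T}]/\mathfrak{J}(\mu)$. The preceding proposition guarantees that $\mathfrak{J}(\mu)$ is zero-dimensional, so its $\CC$-vector space dimension is finite and, by Bézout, bounded by $(D')^{N} = d^{\mathcal{O}(n^2)}$. Either via a Gröbner basis in a single term order followed by counting standard monomials, or via linear algebra on a truncated Macaulay matrix whose degree is controlled by the zero-dimensional regularity (see for example the methods underlying \cite{Canny-PSPACE} and \cite{MaScTs-srur-17}), this dimension can be read off in $(D')^{\mathcal{O}(N)} = d^{\mathcal{O}(n^2)}$ arithmetic operations, which is within the claimed $d^{\mathcal{O}(n^3)}$ bound.

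The main obstacle I anticipate is pinning down the precise constants in the last step, since zero-dimensional Gröbner basis complexity is sensitive to the exact algorithm and the chosen degree/bitsize model; in particular, care is needed to verify that the exponents coming from the three nested $d^{\mathcal{O}(n)}$ and $(\cdot)^{\mathcal{O}(n)}$ bounds multiply out to at most a cubic in $n$ in the exponent of $d$. Modulo this bookkeeping, the three sub-bounds combine directly into the proposition, with the zero-dimensional quotient dimension computation being the dominant cost.
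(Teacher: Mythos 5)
Your decomposition is the same as the paper's: run the equidimensional decomposition of \cite{jeronimo2002effective}, convert the straight-line-program output to dense form via its degree bound, assemble the zero-dimensional ideal $\mathfrak{J}(\mu)$, and finish with a zero-dimensional Gr\"obner basis (or equivalent) computation whose cost is (degree bound)${}^{\mathcal{O}(\text{number of variables})}$. The one substantive discrepancy is your first stage: you assert that the polynomials $g_\nu$ extracted from \cite{jeronimo2002effective} have degree $D = d^{\mathcal{O}(n)}$, whereas the paper takes the bound $d^{n^2}$ from that reference. This is not mere bookkeeping, since it is exactly the place where the exponent of $d$ picks up its powers of $n$: with the paper's bound the final step costs $\bigl(d^{n^2}\bigr)^{\mathcal{O}(n)} = d^{\mathcal{O}(n^3)}$, while your bound would give the stronger $d^{\mathcal{O}(n^2)}$. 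You should either substantiate the $d^{\mathcal{O}(n)}$ degree bound from \cite{jeronimo2002effective} explicitly, or fall back to $d^{n^2}$; in the latter case your second and third stages go through verbatim (the ideal still lives in at most $2n+1$ variables, and the zero-dimensional quotient dimension is computed in singly exponential time in the number of variables relative to the input degree), and you recover precisely the claimed $d^{\mathcal{O}(n^3)}$. So the proof is correct as a proof of the stated proposition either way; only your intermediate claim of an overall $d^{\mathcal{O}(n^2)}$ cost rests on the unverified sharper degree bound. One minor point: the zero-dimensionality of $\mathfrak{J}(\mu)$ is by construction (genericity of the $L_j$ and the Rabinowitsch-type polynomials $P(K_{i_j},T_j)$), not a consequence of the preceding proposition, which only identifies $\deg_\mu(I)$ with $\deg(\mathfrak{I}_\mu)$.
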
\begin{proof}
  From results in \cite{jeronimo2002effective} we know the degree of the polynomials $g_\nu$ appearing in \eqref{eq:P_equi_decomp} is bounded by $d^{n^2}$.
  The output of \cite{jeronimo2002effective} consists of  polynomials in straight-line programs representation. However, using evaluation/interpolation we can convert them to a dense representation, using the bound on their degree.

  Since the ideal $\mathfrak{J}(\mu)$ has dimension zero by construction, using standard bounds on the computation of Gr\"obner basis in dimension zero \cite{hashemi2005complexity,lakshman1991single} (or for other methods to compute the degree of a zero dimensional ideal) gives $(d^{n^2})^{2n-1}$, giving $d^{\mathcal{O}(n^3)}$.
\end{proof}



\subsection{Finding the Radical of an Ideal with Prime Isolated Primary Components}\label{subsec:RadicalPrimeIsolateComps}
In this subsection we
consider a scheme $X$ such that all isolated primary components are reduced and
prove a compact formula in terms of a certain saturation for the ideal of the
reduced scheme $X_{\rm red}$. The following proposition is a summary of the
result we will use in the algorithm in Section \ref{sec:main_algorithm}, this result is
essentially just by definition, and no doubt well known, but we include a short
proof for completeness.

\begin{proposition}
Let $I$ be a homogeneous ideal in $k[x_0,\dots, x_n]$ defining a scheme $X=\mathbb{V}(I)\subset \pp^n$ with $c={\rm codim}(X)$. Let $J=I+K$ where $K$ is the ideal generated by all $(c\times c)$-minors of the Jacobian matrix of $I$. Suppose that all isolated primary components of $X$ are reduced, then $X_{\rm red}=\mathbb{V}(I:J^\infty)$; i.e.,~$\sqrt{I}=I:J^\infty$ up to saturation by the irrelevant ideal. In other words $X_{\rm red}=\overline{X-\mathbb{V}(J)}$.\label{thm:radical_isolated_comps_reduced}
\end{proposition}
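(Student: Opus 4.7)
The approach is to compare primary decompositions via the standard colon-saturation formula. Write
\[
  I = \bigcap_{i \in M} \mathfrak{p}_i \;\cap\; \bigcap_{j \in E} \mathfrak{q}_j,
\]
where $M$ indexes the minimal associated primes (so by hypothesis $\mathfrak{q}_i = \mathfrak{p}_i$ for $i \in M$) and $E$ indexes the embedded primary components with radicals $\mathfrak{p}_j^{\mathrm{emb}}$. The colon-saturation identity $I : J^{\infty} = \bigcap_{\mathfrak{p}\,\not\supseteq\, J} \mathfrak{q}$, taken over the primary components of $I$, reduces the statement---up to the irrelevant ideal---to establishing: (a) $J \not\subseteq \mathfrak{p}_i$ for every $i \in M$, and (b) $J \subseteq \mathfrak{p}_j^{\mathrm{emb}}$ for every $j \in E$.

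For (a), localize at a generic point $p$ of $\mathbb{V}(\mathfrak{p}_i)$. The reducedness hypothesis on the $i$-th isolated primary component gives $(R/I)_{\mathfrak{p}_i} = (R/\mathfrak{p}_i)_{\mathfrak{p}_i}$, which is a field, hence a regular local ring. The Jacobian criterion (valid in characteristic zero) then forces the rank of the Jacobian of $I$ at $p$ to equal $\mathrm{codim}\,\mathbb{V}(\mathfrak{p}_i) \geq c$. Therefore at least one $c \times c$ minor is nonzero modulo $\mathfrak{p}_i$, giving $K \not\subseteq \mathfrak{p}_i$ and hence $J = I + K \not\subseteq \mathfrak{p}_i$.

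For (b), the localization $(R/I)_{\mathfrak{p}_j^{\mathrm{emb}}}$ at an embedded prime must contain a nonzero nilpotent---otherwise the associated prime $\mathfrak{p}_j^{\mathrm{emb}}$ would be redundant, contradicting embeddedness---so this stalk is not reduced and hence not regular. Applying the Jacobian criterion at the generic point of $\mathbb{V}(\mathfrak{p}_j^{\mathrm{emb}})$, the rank of the Jacobian drops strictly below the local codimension of $X$ at that point. Tracking that any embedded prime sits above some minimal prime, one argues that this rank drop is in fact strictly below $c$, so every $c \times c$ minor vanishes at the generic point of $\mathbb{V}(\mathfrak{p}_j^{\mathrm{emb}})$ and $K \subseteq \mathfrak{p}_j^{\mathrm{emb}}$.

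Assembling (a) and (b) yields $I : J^\infty = \bigcap_{i \in M}\mathfrak{p}_i = \sqrt{I}$ up to the stated saturation by the irrelevant ideal, which translates geometrically to $X_{\mathrm{red}} = \overline{X \setminus \mathbb{V}(J)}$. The main technical obstacle is the rank-drop argument in (b): the Jacobian criterion a priori supplies only "rank $<$ local codimension", whereas we need "rank $< c$". This is immediate in the equidimensional setting; in the general case one exploits that an embedded prime always contains some minimal prime and that configurations confined to the vertex of the affine cone are absorbed by the irrelevant-ideal saturation built into the statement.
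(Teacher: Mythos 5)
Your route is the algebraic mirror of the paper's geometric one: the paper likewise splits $X$ into isolated components $W^{\rm iso}_i$ and embedded parts $W^{\rm emb}_i$ and reduces the claim to exactly your (a) and (b). Part (a) is correct and complete: at the generic point of a reduced isolated component of codimension $c_i\geq c$ the stalk of $X$ is a field, the Jacobian there has rank $c_i\geq c$, so some $c\times c$ minor is a unit modulo $\mathfrak{p}_i$ and $J\not\subseteq\mathfrak{p}_i$.

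The gap is in (b), and you have correctly located it but not closed it. At the generic point of an embedded component with prime $\mathfrak{p}$, non-reducedness of the stalk yields only $\operatorname{rank}\operatorname{Jac}<\min\{\operatorname{ht}(\mathfrak{p}_i):\mathfrak{p}_i\subseteq\mathfrak{p}\ \text{minimal}\}$, and when $X$ is not equidimensional this minimum can exceed $c$, so the rank need not drop below $c$. Neither ``an embedded prime sits above some minimal prime'' nor the irrelevant-ideal saturation repairs this; the statement itself fails without an equidimensionality-type hypothesis. Concretely, take $I=(xy,\;xz(x-1),\;xz^2)=(x)\cap(y,z)\cap(x-1,y,z^2)$ in $k[x,y,z]$ (homogenize with a fourth variable $w$ to place it in $\mathbb{P}^3$; the embedded point sits at $[1:1:0:0]$, away from the vertex of the cone). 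Both isolated primary components are prime, $c=1$, and $K$ contains $\partial_y(xy)=x$ and $\partial_x(xy)=y$; hence $J$ lies in no associated prime of $I$, so $I:J^\infty=I\neq\sqrt{I}$ --- the embedded component at $(1,0,0)$ survives the saturation because the Jacobian already has rank $1=c$ there. For what it is worth, the paper's own proof simply asserts the corresponding claim ($W^{\rm emb}_i\subseteq\mathbb{V}(J)$) without argument and is vulnerable to the same example; your step (b) is provable exactly when every embedded prime contains a minimal prime of height $c$ (e.g., when $X$ is equidimensional), and that hypothesis needs to be added for the argument to go through.
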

\begin{proof}
  Relabeling a primary decomposition of $X$ we may write
  $$
X=(W^{\rm iso}_1\cup W^{\rm emb}_1)\cup \cdots \cup(W^{\rm iso}_r\cup W^{\rm emb}_r),
$$
where $W^{\rm iso}_i$ is an isolated primary component and $W^{\rm emb}_i$ is the union of all primary components which correspond to embedded components contained in the irreducible component corresponding to $W^{\rm iso}_i$. Further since, by assumption, all isolated primary components are reduced we must have that $$
X_{\rm red}=W^{\rm iso}_1\cup \cdots \cup W^{\rm iso}_r.
$$Now consider $\overline{X-\mathbb{V}(J)}$, we have \begin{align*}
    \overline{X-\mathbb{V}(J)}=&\overline{\left((W^{\rm iso}_1\cup W^{\rm emb}_1)\cup \cdots \cup(W^{\rm iso}_r\cup W^{\rm emb}_r)\right) -\mathbb{V}(J)}\\
    =&\overline{(W^{\rm iso}_1\cup W^{\rm emb}_1) -\mathbb{V}(J)}\cup \cdots \cup\overline{(W^{\rm iso}_r\cup W^{\rm emb}_r)-\mathbb{V}(J)}\\
    =&W^{\rm iso}_1\cup \cdots \cup W^{\rm iso}_r.
\end{align*} The last equality follows since $W^{\rm iso}_i$ is irreducible and reduced (being reduced means $W^{\rm iso}_i\cap  \mathbb{V}(J)\subsetneq W^{\rm iso}_i$) and since we must have that $W^{\rm emb}_i\subset \mathbb{V}(J)$ for all $i$. Hence,  $X_{\rm red}=\overline{X-\mathbb{V}(J)}$.
\end{proof}

\subsection{Multiplicity of a Subvariety in Isolated Primary Components}
\label{sec:mult-subvariety}

Using the dimension sensitive degree function from
\S\ref{section:degree_Isolated_primary_comp_dim_mu} and the multiplicity formula
\eqref{eq:Multiplicity} we  obtain a method to compute the multiplicity of an
irreducible variety contained in the union of all $\nu$-dimensional isolated primary components of a
scheme $Y\subset \pp^n$ with multiple components of different dimensions.

Let $Y\subset \pp^n$ be an arbitrary subscheme. Let $W$ be the union of all isolated primary components of $Y$ of dimension  $\nu$ and let $X=\mathbb{V}(f_0, \dots, f_r) \subset \pp^n$ be an irreducible subvariety of $W$ with the generators chosen such that $\deg(f_i)= d$.   
Define the ideal $$
{\mathcal{I}}_{\nu}(X)= {I}_{Y} +  \Big\langle\sum_{j=0}^r \lambda_{1,j}f_j,\dots, \sum_{j=0}^r \lambda_{\nu-\dim(X),j}f_j,
\ell_0(\bm{x})-1 ,
  1 -t\sum_{j=0}^r \lambda_{0,j}f_j \Big\rangle
  \subset k[\bm{x}][t].
$$Since $Y$ has isolated primary components of dimension $\nu$ the scheme $\mathbb{V}(\mathcal{I}_{0,\nu})$ has isolated primary components of dimension equal to $\dim(X)$. Then, since $X\subset W$ we have  \begin{equation}
e_XW=\frac{\deg_{\nu}(Y)d^{\nu-\dim(X)}-\deg_{\dim(X)}({\mathcal{I}}_{\nu}(X))}{\deg(X)}.   \label{eq:MultInComp} 
\end{equation}
We note that this formula follows immediately from \eqref{eq:Multiplicity} since
the functions $\deg_\nu$ and $\deg_{\dim(X)}$, respectively, will capture the
degrees of the $\nu$ and $\dim(X)$ dimensional isolated primary components only, respectively.
Hence, since $X$ is contained in the pure $\nu$-dimensional scheme $W$, the primary components of other dimensions
play no role and we immediately obtain $e_XW$. For identical reasons we obtain
an analogous version of \eqref{eq:SegreDimX} which we state below:

\begin{equation}
    \{s(\cV, W)\}_{\dim(X)}=
    d^{\nu-\dim(\cV)} \, \deg_\nu(\cW) - \deg_{\dim(X)}({\mathcal{I}}_{\nu}(X)).\label{eq:SegreDimX_For_component}
\end{equation} 

We note that these computations take as input only the equations defining the schemes $X$ and $Y$ and that their complexity is determined by that of the degree function of \S\ref{section:degree_Isolated_primary_comp_dim_mu}, see Proposition \ref{prop:Deg_Low_dim_Complexity}.
We can also extend this to the case where $X$ is an arbitrary variety not contained in $W$, but contained in $Y$, using \cite[Example~4.3.4]{Fulton}, or the similar result for Segre classes, \cite[Lemma~4.2]{Fulton}, however this extension will not be needed for our purposes here. It is worth noting, however, that in particular the result of \cite[Lemma~4.2]{Fulton} implies that if $X\cap W$ is empty then $s(X,W)=0$ and that if $\dim(X\cap W)<\dim(X)$ then $\{s(\cV, W)\}_{\dim(X)}=0$, and further it follows from results in \cite{HH2019segre} that all these properties hold for the formula given above, e.g.~\eqref{eq:SegreDimX} and \eqref{eq:SegreDimX_For_component}. 


\subsection{Affine Version of Multiplicity}\label{subsec:affineMult}
From the definition of the Hilbert-Samuel multiplicity in terms of the Hilbert-Samuel polynomial one can easily deduce  that the multiplicity of a projective variety in a subscheme of projective space agrees with the multiplicity of the resulting pair of affine varieties on a generic affine patch. Below we will see that our method for computing these multiplicities works similarly. This will be useful when we need to employ sampling techniques which are meant for affine varieties in our main algorithm.  


Let $X\subset W\subset \pp^n$ with $X$ a variety and $W$ the union of all dimension $\nu$ isolated primary components of $Y$. Suppose that $I_X=\langle f_0, \dots f_r\rangle$ and without loss
of generality assume $d:=\deg(f_i)$ for all $i$. Take
$\ell_0(x)\in \CC[x_0,\dots,x_n]$ to be a general linear form and set
$\hat{I}_X=I_X+\langle\ell_0-1\rangle$ and $\hat{I}_Y=I_Y+\langle\ell_0-1\rangle$. Finally, let
$\hat{Y}\subset \CC^{n+1}$ be the scheme defined by the ideal $I_Y+\langle\ell_0-1\rangle $,
let $\hat{W}$ be the pure $\nu$-dimensional subscheme of $\hat{Y}$ arising from $W$ and let
$\hat{X}=V(\hat{I}_X) \subset \CC^{n+1}$. Note that $\hat{X}$ is a subvariety of
$\hat{W}$ and that the degrees and dimensions are unchanged,
i.e.,~$\deg(X)=\deg(\hat{X})$, $\deg_\mu(Y)=\deg_\mu(\hat{Y})$ for all $\mu$
such that $Y$ has an isolated primary component of dimension $\mu$,
$\dim(X)=\dim(\hat{X})$, and $\dim(Y)=\dim(\hat{Y})$. We can think of this
procedure of moving to the ideal $\hat{I}$ from $I$ as a combination of
dehomogenizing with respect to a general point at infinity and linearly
embedding $\CC^n$ into $\CC^{n+1}$. Now
let $\phi_i=f_i$ for $i=1,\dots, r$ and $\phi_{r+1}=\ell_0-1$, set $$
\hat{\mathcal{I}}_\nu(X)= \hat{I}_{Y} +  \Big\langle\sum_{j=0}^r \lambda_{1,j}\phi_j,\dots, \sum_{j=0}^r \lambda_{\nu-\dim(X),j}\phi_j,\ell_0(\bm{x})-1 ,
  1 -t\sum_{j=0}^r \lambda_{0,j}\phi_j \Big\rangle
  \subset k[\bm{x}][t],
$$ and as in the previous subsection $$
{\mathcal{I}}_\nu(X)= {I}_{Y} +  \Big\langle\sum_{j=0}^r \lambda_{1,j}f_j,\dots, \sum_{j=0}^r \lambda_{\nu-\dim(X),j}f_j,\ell_0(\bm{x})-1 ,
1 -t\sum_{j=0}^r \lambda_{0,j}f_j \Big\rangle
  \subset k[\bm{x}][t].
$$Observe that $\dim(\mathcal{I}_i)=\dim(\hat{\mathcal{I}}_i)$ and that $\deg_\nu(\mathcal{I}_i)=\deg_\nu(\hat{\mathcal{I}}_i)$. 
Hence if we define
\begin{equation}
e_{\hat{X}}\hat{W}:=\frac{\deg_\nu(\hat{Y}) d^{\nu-\dim(\hat{X})} - \deg_{\dim(X)}(\hat{I}_\nu(X))}{\deg(\hat{X})}   ,\label{eq:Multiplcity_Affine_Version}
\end{equation} 
then using \eqref{eq:MultInComp} we immediately obtain that 
$e_{\hat{X}}\hat{W}=e_{{X}}{W}.
$ This equivalence will be used extensively in the sequel to allow us to freely switch between projective and affine formulations. This also immediately gives the following formula for the dimension $\dim(X)$ part of the Segre class of $X$ in $W$:\begin{equation}
\{s(X,W)\}_{\dim(X)}=\deg_\nu(\hat{Y}) d^{\nu-\dim(\hat{X})} - \deg_{\dim(X)}(\hat{I}_\nu(X)),\label{eq:Segre_DimX_AffineForm}    
\end{equation}
where we no longer need to assume that $X$ is irreducible or reduced, we only require $X$ is a subscheme of $W$. Note, in Algorithm \ref{alg:is_radical} we will abuse notation and write $\{s(\hat{X},\hat{W})\}_{\dim(X)}$ to mean the expression $\{s(X,W)\}_{\dim(X)}$ computed via the affine formula \eqref{eq:Segre_DimX_AffineForm} above. 


\section{Testing if an Ideal is Radical}\label{sec:main_algorithm}
In this section we present our main algorithm, Algorithm \ref{alg:is_radical},
for testing if a scheme $Y$ in $\pp^n$ is reduced or, equivalently, if the
defining ideal is radical up to saturation by the irrelevant ideal. First, in
\S\ref{subsec:IdealAlgorithm}, we give the idealized version (i.e.,~we assume
that we can work over $\CC$) and that for a point sampled from an irreducible
component we obtain the associated maximal ideal. Next, in
\S\ref{sec:mult-and-RUR}, we explain how we would adapt Algorithm
\ref{alg:is_radical} to perform computations with (groups of) sampled points
represented in the Rational Univariate Representation (RUR), as it is required
for a realistic symbolic implementation.

\subsection{Algorithm}\label{subsec:IdealAlgorithm}
Here we present our main algorithm, Algorithm \ref{alg:is_radical}, to test if a homogeneous ideal is radical, up to
saturation via the irrelevant ideal. We begin with a proof of correctness. 

\begin{theorem}
  \label{thm:AlgCorrect}
  Given a homogeneous ideal $I$ in $k[x_0,\dots, x_n]$ Algorithm
  \ref{alg:is_radical} correctly tests if
  $I:\langle x_0,\dots, x_n \rangle^\infty$ is radical and termiates in finite time.
\end{theorem}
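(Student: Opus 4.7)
The plan is to prove correctness by tracing Algorithm \ref{alg:is_radical} step by step, matching each phase against the geometric classification of \emph{where} non-reducedness can live: either inside an isolated primary component, or in an embedded component outside the singular locus of $X_{\rm red}$, or in an embedded component on that singular locus. The ideal $I:\langle x_0,\dots,x_n\rangle^{\infty}$ is radical precisely when none of these three situations occurs, so it suffices to show the algorithm detects each in turn.

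First, I would argue the sampling phase is sound. By Corollary \ref{cor:RUR} we obtain at least one generic point $p_W$ in every irreducible component of $X_{\rm red}$, i.e.\ in every isolated primary component $W$ of $X$. For each such $p_W$, treating $p_W$ as a subvariety of the ambient $\mathbb{P}^n$ (or its affine trace as in \S\ref{subsec:affineMult}), the formula \eqref{eq:MultInComp} applied with $X=\{p_W\}$ and $Y=\mathbb{V}(I)$ computes $e_{p_W} W$; here I would invoke the degree-by-dimension procedure of \S\ref{section:degree_Isolated_primary_comp_dim_mu}, which takes only the ideal generators as input, to guarantee these numbers are actually computable. By Proposition \ref{prop:RadicalTestIrreducibleCase}, the equality $e_{p_W}W=1$ is equivalent to $W$ being reduced at a generic point (and $p_W$ not lying in the singular locus of $W_{\rm red}$, which holds automatically as $p_W$ is generic in $W$). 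Hence the first multiplicity test correctly rejects $I$ iff some isolated primary component is non-reduced.

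Assuming every isolated primary component is reduced, the only remaining obstruction to $I$ being radical is the presence of embedded components. Here I would split: for each embedded component $E$ whose support is \emph{not} contained in the singular locus of $X_{\rm red}$, the scheme $\mathbb{S}\mathfrak{ing}(X)=\mathbb{V}(J)$ defined by the codimension-many Jacobian minors picks up $E$ as a distinguished component, and sampling from $\mathbb{V}(J)$ supplies a generic point $q$ of (the support of) $E$ lying inside some isolated $W$ but outside $W$'s singular locus. Re-using the formula \eqref{eq:MultInComp} with this $q$ then yields $e_q W>1$ precisely when an embedded component passes through $q$, which is exactly the case by Proposition \ref{prop:RadicalTestIrreducibleCase}. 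Thus the second multiplicity round correctly detects every embedded component outside $\operatorname{Sing}(X_{\rm red})$.

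If the algorithm reaches its final phase, we know the isolated primary components are reduced and all embedded components (if any) lie on $\operatorname{Sing}(X_{\rm red})$. Proposition \ref{thm:radical_isolated_comps_reduced} then gives the explicit formula $\sqrt{I}=I:J^{\infty}$ up to saturation by $\langle x_0,\dots,x_n\rangle$, so the algorithm has a generating set for $\sqrt{I}$. Computing $s(X,\mathbb{P}^n)$ and $s(X_{\rm red},\mathbb{P}^n)$ via the Segre class machinery of \S\ref{sec:Segre-intro} and comparing them via Corollary \ref{cor:test_ideal_equal_radical} then correctly decides whether $I:\langle x_0,\dots,x_n\rangle^{\infty}=\sqrt{I}$. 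Finite termination is immediate: every subroutine (equidimensional decomposition, degree-in-dimension-$\mu$ computation, projective degree computation for Segre classes, RUR, and ideal saturation) terminates in finite time by the results cited from \S\ref{sec:background} and \S\ref{sec:collection}. The subtlest point in the argument, and the one I would spend the most care on, is the second multiplicity step: I need to justify that the generic point of \emph{each} embedded component outside $\operatorname{Sing}(X_{\rm red})$ actually appears among the samples drawn from $\mathbb{V}(J)$ (so that none slip by undetected), which requires tying together the definition of $J$ via Jacobian minors, the fact that such embedded components contribute their own irreducible components to $\mathbb{V}(J)$, and the completeness guarantee of Corollary \ref{cor:RUR} applied to $\mathbb{V}(J)$.
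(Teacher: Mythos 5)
Your first phase (generic points in isolated components, multiplicity one iff reduced via Proposition \ref{prop:RadicalTestIrreducibleCase}) and your final phase (Proposition \ref{thm:radical_isolated_comps_reduced} plus Corollary \ref{cor:test_ideal_equal_radical}) match the paper's proof. But your second phase contains a genuine error: you claim that for a generic point $q$ of an embedded component outside $\operatorname{Sing}(X_{\rm red})$, the test ``yields $e_qW>1$ precisely when an embedded component passes through $q$.'' This is backwards, and it contradicts the very proposition you cite. The Hilbert--Samuel multiplicity $e_qW$ is computed relative to the (union of top-dimensional) isolated primary components through $q$; lower-dimensional embedded structure does not contribute to the leading coefficient of the Hilbert--Samuel polynomial. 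Since you have already verified that the isolated components are reduced, and $q$ is by hypothesis \emph{not} in $\operatorname{Sing}(X_{\rm red})$, Proposition \ref{prop:RadicalTestIrreducibleCase} forces $e_qW=1$. The actual detection logic (Line 16 of the algorithm) is: $q$ was sampled from the singular subscheme $\mathbb{V}(J)$, so if its multiplicity in the largest isolated component containing it equals $1$, then $q$ is a smooth point of a reduced component, and the \emph{only} remaining explanation for $q\in\mathbb{V}(J)$ is an embedded component through $q$ lying outside the singular locus --- hence return \textsc{false}. Multiplicity $>1$ means $q$ lies on $\operatorname{Sing}(Y_{\rm red})$, where embedded components cannot be distinguished from honest singularities by this test; that case is deliberately deferred to the final Segre-class comparison. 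With your inverted criterion the algorithm would reject exactly the inputs it should pass and vice versa in this phase.

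A secondary omission: the paper's proof also has to handle the dimension bookkeeping in the inner loop over $\nu$ (starting from the top dimension, using that $\{s(q,W_\rho)\}_0=0$ when $q\notin W_\rho$, and breaking once a positive value is found), so that the multiplicity is taken in the \emph{largest}-dimensional isolated component containing $q$. You correctly flag, as the subtle point, that every embedded component outside the singular locus must actually be sampled from $\mathbb{V}(J)$; but the part of the argument you would need to ``spend the most care on'' is really the case analysis above, which your proposal gets wrong.
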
\begin{proof}
  The fact that Line 7 detects all isolated primary components which are non-reduced
  follows from Proposition \ref{prop:RadicalTestIrreducibleCase}, combined with
  \eqref{eq:Multiplcity_Affine_Version} and \eqref{eq:Segre_DimX_AffineForm}, and
  the fact that we work with a single reduced point in the first factor of the
  Segre class. Now we consider the case where  Line~16 detects embedded components
  which are not contained in the singular locus of $Y_{\rm red}$. First, note
  that if $q$ is a reduced point in the singular subscheme of $Y$, then either
  $q$ is contained in an embedded component which is embedded outside of the
  singular locus of $Y_{\rm red}$ or $q$ is inside the singular locus of
  $Y_{\rm red}$. Note the loop in Line 14 starts in top dimension. Suppose that $\nu$ is the largest value such that $q$ is
  contained in $W_\nu$ (in the notation of the algorithm). If $q$ is inside the
  singular locus of $Y_{\rm red}$ then its multiplicity will always be greater
  than one in $W_\nu$ and $s(q,W_\rho)=0$ for $\rho> \nu$ since $q$ cannot be
  contained in these. If $q$ is not inside the singular locus of $Y_{\rm red}$
  then it must be inside some embedded component of $Y$. In this case $q$ will
  also be contained in some higher dimensional isolated primary component, but then
  since we know all the isolated primary components are reduced, and since multiplicity
  is computed relative to the top dimensional component, we will obtain that the
  multiplicity of $q$ is one inside this highest dimensional component in which
  it is contained by Proposition \ref{prop:RadicalTestIrreducibleCase}. Hence we
  will have correctly identified the existence of any embedded components
  outside of the singular locus of $Y_{\rm red}$. The correctness of the
  remainder of the algorithm follows from the combination of Proposition
  \ref{thm:radical_isolated_comps_reduced}, for Line 21, and Corollary
  \ref{cor:test_ideal_equal_radical}, for Lines 22--25. The algorithm terminates
  since there are finitely many components.
\end{proof}

\vspace{1em}
\begin{algorithm2e}[H]
  \SetFuncSty{textsc}
  \SetKw{RET}{{\sc return}}
  \KwIn{
    A homogeneous ideal $I=\langle f_0, \dots, f_r \rangle \subset \CC[x_0,\dots, x_n]$ defining a subscheme $Y$ of $\pp^n$.
  }
  
  \KwOut{ \textsc{true} if $Y$ is reduced (i.e.,~if $I:\langle x_0, \dots, x_n \rangle^\infty$ is radical) and \textsc{false} otherwise.}
  
  Set $\ell_0(x)$ to be a generic (random) homogeneous linear form in $\CC[x_0,\dots, x_n]$\;Set $I={I}+\langle \ell_0-1 \rangle$; Set $N=I$; Set $\hat{Y}$ to be the subscheme of $\CC^{n+1}$ associated to $N$\;
  
  \For{$\nu=\dim(\hat{Y}), \dots, 0$ }{
    Compute (at least) one generic point in each dim.~$\nu$ irreducible component of $\hat{Y}$ via the method summarized in Corollary \ref{cor:RUR}. Call the resulting collection of points $\{{p}_1,\dots , {p}_r\} \subset \CC^{n+1}$\; 
    \For{$p \in \{p_1,\dots , p_r\}$ }{
    Let $W(p)$ denote the isolated primary component of $\hat{Y}$ containing $p$\;
    Compute $\{s(p,W(p))\}_0$ using \eqref{eq:Segre_DimX_AffineForm}\;
    \If{ $\{s(p,W(p))\}_0\not= 1$}
    {\RET \textsc{false} \;}
    }
    }
    Set $c={\rm codim}(Y, \PP^n)$\;
    Set $J=I+\mathfrak{Jac}_c(I)$ where $\mathfrak{Jac}_c(I)$ is the ideal generated by the $c\times c$ minors of the Jacobian matrix of $f_0,\dots, f_r$\;
     Using the method of Corollary \ref{cor:RUR} compute (at least) one generic point in each irreducible component of the singular subscheme $\mathbb{V}(J)$ of $\hat{Y}$, call this collection of points $\{{q}_1,\dots , {q}_s\} \subset \CC^{n+1}$\;
        \For{$q \in \{q_1,\dots , q_r\}$ }{
        \For{$W_\nu$ the union of isolated primary  comp.~of $\hat{Y}$ of dim.~$\nu$, starting from top dim.~$\nu=\dim(Y)$}{
    Compute $\{s(q,W_\nu) \}_0$ using  \eqref{eq:Segre_DimX_AffineForm} \;
    \If{ $\{s(q,W_\nu) \}_0= 1$}
    {$q$ is contained in an embedded component of $Y$ but not in the singular locus of $Y_{\rm red}$\;
    \RET \textsc{false} \;}
    \If { $\{s(q,W_\nu) \}_0>0$}{
    \textsc{break} to line 13 and select the subsequent point $q$ in the list\;
    }
    }
    }
    Compute the saturation $K=I:J^\infty$ and let $Y_{\rm red}=\VV(K)$, note $K$ is equal to $\sqrt{I}$ by Proposition \ref{thm:radical_isolated_comps_reduced}\;
       \If{ $s(Y_{\rm red}, \PP^n) \not  = s(Y,\PP^n)$}
    {\RET \textsc{false} \;}
    \Else{  \RET \textsc{true} \;}

  \caption{\FuncSty{is\_Radical}}
  \label{alg:is_radical_V1}
  \label{alg:is_radical}
\end{algorithm2e}

\begin{remark}[Numerical Version of Algorithm \ref{alg:is_radical}]
\label{remark:NumericVersion}
One could implement a version of Algorithm~\ref{alg:is_radical} using
methods from numerical algebraic geometry \cite{sommese2005numerical} for point sampling and Segre class computations (via \eqref{eq:Segre_DimX_AffineForm}). For both tasks, implementations such as the stand alone packages Bertini~\cite{Bertini}, PHCPack~\cite{PHCPack}, or the {\tt NumericalAlgebraicGeometry} Macaulay2~\cite{M2} package could be employed. The only step which could not be implemented, at least straightforwardly, numerically is Line~21. However, we
do not focus on this case here. We instead focus on the case where points are represented symbolically since our primary aim is to give
complexity bounds and these are well understood for the symbolic methods such
as \cite{Rojas-sparse-rur} which we employ (via the adaptations
discussed in \S\ref{sec:mult-and-RUR} below, see Remark \ref{remark:usingRUR} specifically). To
furnish a reliable numerical implementation would require one to establish
bounds on the precision needed in the points sampled in Lines 4 and 12 to ensure
that the numeric versions of the degree computations in \eqref{eq:Segre_DimX_AffineForm} would produce the
correct results using the approximate points (since \eqref{eq:Segre_DimX_AffineForm} uses the
defining equations of the point, which would in the numeric case be
approximate).
\end{remark}

We now prove a worst case complexity bound for the operations preformed in Algorithm \ref{alg:is_radical} above. As noted in the Introduction this bound depends on the geometry of the input scheme $Y\subset \pp^n$.
    \begin{theorem}
      Let $I=\langle f_0, \dots, f_r \rangle$ be a homogeneous ideal in
      $k[x_0,\dots, x_n]$ with $\deg(f_i)\leq d$ defining a scheme $Y$ in
      $\pp^n$. If $Y$ has reduced isolated primary components and has no embedded
      components outside of the singular locus of
      $Y_{\rm red}=\mathbb{V}(\sqrt{I})$, then Algorithm~\ref{alg:is_radical}
      computes the radical via a single ideal saturation and its  worst case
      complexity is doubly exponential in $n$. In all other cases, Algorithm~\ref{alg:is_radical} has worst case complexity bounded in
      $d^{\mathcal{O}(n^4)}$, that is singly exponential in $n$.
\end{theorem}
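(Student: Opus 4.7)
The plan is to walk through Algorithm~\ref{alg:is_radical} line by line, classify each operation as either singly exponential in $n$ (of the form $d^{\mathcal{O}(n^k)}$) or doubly exponential, and isolate the saturation on Line~21 as the \emph{only} source of a doubly exponential contribution. The two cases of the theorem then correspond to whether or not the algorithm ever reaches that line before returning.

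First I would bound the cost of the ``point sampling plus multiplicity test'' phase, i.e.\ Lines~1--12. Sampling at least one generic point in every irreducible component of $\hat{Y}$ uses $d^{\mathcal{O}(n)}$ arithmetic operations by Corollary~\ref{cor:RUR}, and the number of such points is bounded by the B\'ezout-type bound $d^{\mathcal{O}(n)}$. For each sampled $p$ the quantity $\{s(p,W(p))\}_0$ is evaluated through the affine formula \eqref{eq:Segre_DimX_AffineForm}, which reduces to two degree-of-an-isolated-primary-component computations; each of these is $d^{\mathcal{O}(n^3)}$ by Proposition~\ref{prop:Deg_Low_dim_Complexity}. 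The whole phase therefore costs $d^{\mathcal{O}(n^4)}$. Next I would handle Lines~13--20: generating the $c\times c$ minors of the Jacobian is polynomial in the input, the minors have degree at most $(n{+}1)d$, sampling generic points in $\mathbb{V}(J)$ again costs $d^{\mathcal{O}(n)}$ by Corollary~\ref{cor:RUR}, and the double loop over points $q$ and dimensions $\nu$ invokes \eqref{eq:Segre_DimX_AffineForm} at most $\mathcal{O}(n)\cdot d^{\mathcal{O}(n)}$ times, each invocation bounded by $d^{\mathcal{O}(n^3)}$. Collecting everything up to Line~20 yields a singly exponential total of $d^{\mathcal{O}(n^4)}$.

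Now comes the dichotomy. If any multiplicity test on Line~7 or any embedded-component test on Line~16 triggers a \textsc{false} return, the algorithm terminates before touching Line~21 and the singly exponential bound $d^{\mathcal{O}(n^4)}$ is the final cost, giving the second clause of the theorem. Otherwise, by the correctness argument of Theorem~\ref{thm:AlgCorrect}, all isolated primary components are reduced and no embedded components lie outside the singular locus of $Y_{\rm red}$: this is exactly the hypothesis of the first clause. In this case we must execute Line~21, the saturation $K=I:J^\infty$; by Proposition~\ref{thm:radical_isolated_comps_reduced} this does compute $\sqrt{I}$ (up to saturation by the irrelevant ideal). The generators of $K$ produced by any standard saturation routine (equivalently a Gr\"obner basis argument, as reviewed in the Introduction) can have doubly exponential degree in $n$, and no better worst-case bound is presently known. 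The two subsequent Segre class computations on Line~22 inherit this doubly exponential bound: although each projective degree in the expansion of \cite{HH2019segre} is by itself a zero-dimensional degree computation, the generators of $K$ fed into $s(Y_{\rm red},\pp^n)$ have doubly exponential size, so the overall cost remains doubly exponential.

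The main obstacle, and the reason the first clause cannot be improved with this approach, is precisely the saturation on Line~21. Even with the machinery from \S\ref{subsec:Segre_IntegralClosure} that lets us sidestep an explicit Gr\"obner-basis equality test, we still need a generating set of $\sqrt{I}=I:J^\infty$ to compute $s(Y_{\rm red},\pp^n)$ via \eqref{eq:ProjDegIdeal}; producing those generators from those of $I$ and $J$ is the single step for which we cannot at present give a $d^{\mathcal{O}(n^k)}$ bound. Granting this, and using the singly exponential bounds established for Lines~1--20 and the singly exponential bounds of \S\ref{section:degree_Isolated_primary_comp_dim_mu} and \S\ref{subsec:affineMult} for every other degree and Segre-class evaluation, the two cases of the theorem follow.
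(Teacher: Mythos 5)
Your proposal is correct and follows essentially the same route as the paper: isolate the saturation on Line~21 as the only doubly exponential operation, note that it is reached exactly in the case described by the first clause (via the correctness proof), and bound every other step by Corollary~\ref{cor:RUR} and Proposition~\ref{prop:Deg_Low_dim_Complexity}. The only (harmless) accounting difference is that the paper derives the exponent $n^4$ from applying the $\deg_\nu$ function to ideals whose generators may have degree up to $d^{n}$, giving $(d^{n})^{\mathcal{O}(n^3)}=d^{\mathcal{O}(n^4)}$, whereas you obtain the same bound by multiplying the number of sampled points by a per-invocation cost of $d^{\mathcal{O}(n^3)}$.
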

\begin{proof}
  That the saturation is only computed if $Y$ has only reduced isolated primary
  components and has no embedded components supported outside of the singular locus of
  $Y_{\rm red}=\mathbb{V}(\sqrt{I})$ follows from the proof of Theorem
  \ref{thm:AlgCorrect}. The complexity of the saturation in this case is that of
  a Gr\"obner basis in an elimination order, i.e.,~doubly exponential in $n$,
  e.g.,~\cite{Yap-AA-2000}. In all other cases the complexity is that of the
  $\deg_\nu$ function applied to an ideal generated by polynomials of degree no
  more than $d^n$, as this is the upper bound on the degree of the generators of
  the ideal $J$ defined in line 11 of Algorithm~\ref{alg:is_radical}. The
  conclusion then follows by Proposition~\ref{prop:Deg_Low_dim_Complexity}.
\end{proof}

\begin{remark}[The minors of the Jacobian]
We should note that Algorithm~\ref{alg:is_radical} also computes, Line~11, all the $c\times c$ minors of the Jacobian.
Each minor requires a determinant computation; this costs $\OO(c^{\omega})$, where $\omega$ is the exponent of the complexity of matrix multiplication \cite{GG-mca-2013}. There, are ${r+1 \choose c}\cdot {n+1 \choose c} \leq (16 r n )^c$ minors;
  since $c \leq n$, this number is bounded by $(16 r n)^n$.
  As $r$, the number of polynomials, is part of the input
  this bound is still singly exponential in $n$. Even more, it is reasonable to assume that for all practical cases $r =  \OO(d^n)$.
 \end{remark}

\subsection{Using Points Represented in the Rational
  Univariate Representation}
\label{sec:mult-and-RUR}
In the previous subsection we assumed that all points could be represented exactly
as maximal ideals in our ambient coordinate ring. In practice on a computer we
will in fact work in the ring $\QQ[x_1,\dots, x_m]$ and will need a way to
represent each point which may appear in our algorithm in this ring. We will use
the {\em rational univariate representation (RUR)}, see Theorem~\ref{thm:RUR}.

Consider zero
dimensional ideals defining sets of points in $\CC^{m}$;
usually for our purposes below $m=n+2$.
We now consider the representation of points in RUR and how this
interacts with multiplicity computation \eqref{eq:Multiplcity_Affine_Version} and Sergre class computation \eqref{eq:Segre_DimX_AffineForm}.
To setup the context and the notation
for a 0-dimensional ideal $\mathcal{I} \subset \QQ[x_1,\dots, x_m]$
the RUR of $(\VV(\mathcal{I}))_{\rm red}$ is the ideal
$$
J=\left\langle
R(\theta),x_1-\frac{A_1(\theta)}{R'(\theta)},\dots, x_m-\frac{A_m(\theta)}{R'(\theta)}
\right\rangle \subset \QQ(\theta)[x_1,\dots, x_m],
$$
where $R,A_1,\dots, A_m \in \QQ[\theta]$ are square-free polynomials.
Equivalently, we will consider the RUR of $(\VV(\mathcal{I}))_{\rm red}$ as the  ideal
$$
\mathcal{J}=\left\langle
R(\theta),1-T\cdot R'(\theta), x_1R'(\theta)-{A_1(\theta)},\dots, x_mR'(\theta)-{A_m(\theta)}
\right\rangle \subset \QQ[x_1,\dots, x_m,\theta,T].
$$
Further, there is a $\QQ$-algebra isomorphism between $\QQ[x_1,\dots, x_m]/\sqrt{\mathcal{I}}$ and $\QQ(\theta)[x_1,\dots, x_m]/J$, and between $\QQ[x_1,\dots, x_m]/\sqrt{\mathcal{I}}$ and $\QQ[x_1,\dots, x_m,\theta,T]/\mathcal{J}$, and the variety $(\VV(\mathcal{I}))_{\rm red}$ consists of $\deg(R(\theta))$ reduced points \cite{Rou:rur:99}.

The following result let us compute the zero dimensional part of the Segre class of a set of points inside a scheme when the set of points is represented using a RUR. 
\begin{theorem}
  Let $I=\langle f_1,\dots, f_r \rangle $ be a homogeneous ideal in
  $\QQ[x_0,\dots, x_n]$ defining a scheme $Y$ in $\pp^n$ and let
  $X$ be a zero dimensional variety (i.e.,~a union of
 reduced points) contained in $Y$. Suppose
  $I_0=\langle g_0, \dots, g_\mu \rangle$ is the radical ideal defining $X$. Fix a (general) dehomogenization of
  $I$ via the linear form $\ell_0(x)$ corresponding to a generic affine patch of
  $\pp^n$; this gives the scheme $\hat{Y}$ in $\CC^{n+1}$ defined by the ideal
  $\hat{I}=\langle f_1,\dots, f_r ,\ell_0-1\rangle$ and a set of points
  $\hat{X}\subset \CC^{n+1}$ defined by the ideal
  $\hat{I}_0=I_0+\langle \ell_0-1 \rangle$.
  Let $$\mathcal{J}=\left\langle R(\theta),1-T\cdot R'(\theta), x_1R'(\theta)-{A_1(\theta)},\dots, x_nR'(\theta)-{A_n(\theta)} \right\rangle=\langle g_0,g_1,\dots, g_{n+1} \rangle$$
  be a polynomial ideal in $\QQ[x_0,\dots, x_n,\theta,T]$ giving a RUR of $\hat{X}$. Finally consider the polynomial
  ring $\QQ[x_0,\dots, x_n,\theta,t,T]$ and define the
  ideal $$ \mathcal{I}=\hat{I}+\langle P_1,\dots, P_{\dim(Y)},1-tP_0 \rangle \subset \QQ[x_0,\dots, x_n,\theta,t,T],
$$ where $P_j=\sum_{i=_0}^{\mu}\lambda_i^{(j)} g_i$ for general $\lambda_i^{(j)}\in \QQ$. Then $$
 \{s(X,Y)\}_0=\deg(Y)\cdot \deg(R(\theta))^{\dim(Y)}-\dim_\QQ\left( \QQ[x_0,\dots, x_n,\theta,t,T]/\mathcal{I}\right).
 $$\label{thm:Segre_0_comp_RUR}
\end{theorem}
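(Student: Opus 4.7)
The plan is to realise the formula as a direct specialisation of the affine Segre-class expression \eqref{eq:Segre_DimX_AffineForm} in the case $\dim(X)=0$, with the rational univariate representation playing the role of a common-degree generating set for the defining ideal of $\hat X$.

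The first step is to verify that the generators of $\mathcal J$ share a common total degree $d = \deg R(\theta)$. Inspection is immediate: $R(\theta)$ has degree $\deg R$; the relation $1 - T R'(\theta)$ has degree $1 + (\deg R - 1) = \deg R$; and each $x_i R'(\theta) - A_i(\theta)$ has degree $\deg R$, provided the $A_i$'s have been reduced modulo $R$. Furthermore, the $\QQ$-algebra isomorphism between $\QQ[\bm x]/\sqrt{\hat I_0}$ and the RUR quotient (recalled just after Theorem~\ref{thm:RUR}) exhibits $\mathcal J$ as a common-degree generating set of an ideal cutting out a scheme isomorphic to $\hat X$; in particular $\deg(X) = \deg R(\theta)$, since the cardinality of the reduced zero-dimensional set $\hat X$ coincides with the number of roots of $R$.

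The second step is to match the ideal $\mathcal I$ of the theorem with the $i = \dim(X) = 0$ instance of \eqref{eq:ProjDegIdealEqs}, with the $g_i$'s taking the role of the $f_i$'s. In this instance there are $\dim(Y)$ generic hyperplane sections $P_1,\dots,P_{\dim(Y)}$, no auxiliary linear forms $\ell_1,\dots,\ell_i$ (since $i=0$), the dehomogenising equation $\ell_0-1$ has been absorbed into $\hat I$, and there is a single Rabinowitsch equation $1 - tP_0$. Applying \eqref{eq:ProjDegIdeal} then identifies $\pdg_0(X,Y) = \dim_\QQ\bigl(\QQ[x_0,\dots,x_n,\theta,t,T]/\mathcal I\bigr)$. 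The third and final step is to substitute $d = \deg R(\theta)$ together with this identification into the $\dim(X)=0$ form of \eqref{eq:Segre_DimX_AffineForm}, namely $\{s(X,Y)\}_0 = d^{\dim(Y)}\deg(Y) - \pdg_0(X,Y)$, yielding the claimed identity.

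The principal difficulty lies in the second step: the projective-degree formula \eqref{eq:ProjDegIdealEqs} was originally stated using generators of the ideal of $X$ within the original polynomial ring, whereas the RUR generators live in an enlarged ring involving the auxiliary variables $\theta$ and $T$. Justifying this substitution requires showing that the generic linear combinations $P_j$, although formally polynomials in the larger ring, cut $\hat Y$ in an intersection-theoretically equivalent manner to generic combinations of the original defining polynomials of $\hat I_0$. The RUR isomorphism of coordinate rings supplies the needed bridge: the auxiliary variables $\theta$ and $T$ are pinned by the very structure of the RUR generators, so that no spurious excess contribution is introduced by the enlarged ambient ring, and the resulting quotient has the same $\QQ$-dimension as the analogous quotient built from generators of $\hat I_0$ of common degree $d$.
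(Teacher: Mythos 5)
Your proposal is correct and follows essentially the same route as the paper: reduce to the $\dim(X)=0$ instance of the Segre-class/projective-degree formula with $d=\deg R(\theta)$, and invoke the RUR $\QQ$-algebra isomorphism to argue that replacing common-degree generators of $I_0$ by the RUR generators in the enlarged ring does not change the $\QQ$-dimension of the resulting zero-dimensional quotient. The only difference is presentational: where you assert the bridge in your final paragraph, the paper makes it concrete by introducing an intermediate ideal $\mathfrak{V}$ built from homogeneous generators $w_i$ of $I_0$ all of degree $\deg R(\theta)$, for which the standard formula applies verbatim, and then showing the isomorphism $\Phi$ carries general combinations of the $w_i$ to general combinations of the RUR generators, so the two quotients have equal dimension.
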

\begin{proof}
  Pick a set of homogeneous generators for
  ${{I}_0}=\langle w_1,\dots w_s\rangle$ with $\deg(w_i)=\deg(R(\theta))$ and
  set $$ \mathfrak{V}=\hat{I}+\left\langle \sum_{i=_1}^{s}\rho_i^{(1)} w_i,\dots, \sum_{i=_1}^{s}\rho_i^{(\dim(Y))} w_i,1-t\left( \sum_{i=_1}^{s}\rho_i^{(0)} w_i \right)\right\rangle \subset \QQ[x_0,\dots, x_n,t],
 $$ for general $\rho_i^{(j)}\in \QQ$. Since $\deg(w_i)=\deg(R(\theta))$ then by \eqref{eq:ProjDegIdealEqs}, \eqref{eq:ProjDegIdeal}, and \eqref{eq:SegreDimX} we have that  that  $$\{s(X,Y)\}_0=\deg(Y)\cdot \deg(R(\theta))^{\dim(Y)}-\dim_\QQ\left( \QQ[x_0,\dots, x_n,t]/\mathfrak{V}\right).$$ Hence to prove the result it is sufficient to establish an equality between $\dim_\QQ\left( \QQ[x_0,\dots, x_n,t]/\mathfrak{V}\right)$ and $\dim_\QQ\left( \QQ[x_0,\dots, x_n,\theta,t,T]/\mathcal{I}\right)$.
 
 By the definition of the RUR  we have a $\QQ$-algebra isomorphism, which we will denote $\Phi$, between $\QQ[x_0,\dots, x_n,t]/\langle w_1,\dots, w_s\rangle$ and $\QQ[x_0,\dots, x_n,t,\theta,T]/\mathcal{J}$, c.f.~\cite{Rou:rur:99,AlBeRoWo-idem-96}. This isomorphism induces a map, which we also call $\Phi$, between the quotient rings $\QQ[x_0,\dots, x_n,t]/\hat{I}$ and $\QQ[x_0,\dots, x_n,t,\theta,T]/\hat{I}$. Necessarily this induced map $$\Phi:\QQ[x_0,\dots, x_n,t]/\hat{I}\to \QQ[x_0,\dots, x_n,t,\theta,T]/\hat{I}$$ must map a general polynomial of a fixed degree in $\langle w_1,\dots, w_s\rangle$  to a general polynomial of the same degree in $\mathcal{J}$; in particular we have that $$
 \Phi\left(\sum_{i=_1}^{s}\rho_i w_i\right)=\sum_{i=_0}^{n+1}\lambda_i g_i,
 $$
 for some general constants $\lambda_i$ which will be determined by the general $\rho_i$. Since dimensions of the $\QQ$-vector spaces being considered do not depend on the choice of constants, provided they are general, it follows that 
 $$\dim_\QQ\left( \QQ[x_0,\dots, x_n,t]/\mathfrak{V}\right)=\dim_\QQ(\Phi( \QQ[x_0,\dots, x_n,t]/\mathfrak{V}))=\dim_\QQ\left( \QQ[x_0,\dots, x_n,\theta,t,T]/\mathcal{I}\right).$$
 In other words, the image of $\Phi$ applied to the quotient ring $\QQ[x_0,\dots, x_n,t]/\mathfrak{V}$ may differ from the quotient ring $\QQ[x_0,\dots, x_n,\theta,t,T]/\mathcal{I}$, but only up to a choice of general constants, which does not effect the resulting vector space dimension. 
\end{proof}

This result in conjunction with Proposition~\ref{prop:SegreTest} below, which can be used used in Lines 8 and 16 of Algorithm \ref{alg:is_radical_V1} via the RUR  of a zero dimensional variety  $\hat{X}$. As noted
in the proof this criterion is in effect testing all the multiplicities of dimension zero components at once.

\begin{proposition}
  Let $Y$ be a scheme in $\pp^n$, let $W$ be the union of all $\nu$-dimensional primary components of $Y$ and let $X$ a dimension zero variety fully contained in $W$ and such that no point in $X$ is in the singular locus of $W_{\rm red}$.  Then $\{s(X,W)\}_0>\deg(X)$ if and only if $W$ is not reduced.  \label{prop:SegreTest}
\end{proposition}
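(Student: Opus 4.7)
The plan is to reduce the equivalence to Samuel's criterion (Proposition~\ref{prop:RadicalTestIrreducibleCase}) by exploiting additivity of the degree-zero part of the Segre class across the disjoint reduced points of $X$.

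First I would write $X$ as a disjoint union of its reduced points, $X = \{p_1,\dots,p_m\}$, so that $\deg(X) = m$. Because these points have pairwise disjoint supports as closed subschemes of $W$, Segre classes are additive in this situation (each $p_i$ pulls back under the blowup $\pi \colon \mathrm{Bl}_X W \to W$ to a connected component of the exceptional divisor), giving
\[
\{s(X,W)\}_0 \;=\; \sum_{i=1}^m \{s(p_i,W)\}_0.
\]
Since each $p_i$ is a reduced point, $\deg(p_i)=1$, so the coefficient of $[p_i]$ in $s(p_i,W)$ is precisely the zero-dimensional part, and by definition of the Hilbert--Samuel multiplicity this coefficient equals $e_{p_i}W$. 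Computationally this is exactly the formula \eqref{eq:Segre_DimX_AffineForm} applied with $\nu = \dim W$ and $\dim(X)=0$.

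Next I would invoke Samuel's criterion on each $p_i$: $e_{p_i}W = 1$ iff $W$ is generically reduced on the component containing $p_i$ and $p_i$ is not contained in the singular locus of $W_{\rm red}$. The hypothesis of the proposition removes the second condition uniformly, so $e_{p_i}W = 1$ precisely when the isolated primary component of $W$ meeting $p_i$ is reduced, while $e_{p_i}W \geq 2$ otherwise. Hence
\[
\{s(X,W)\}_0 \;=\; \sum_{i=1}^m e_{p_i}W \;\geq\; m \;=\; \deg(X),
\]
with strict inequality if and only if some $p_i$ sits in a non-reduced primary component of $W$. Reading this in the setting in which the result is applied in Algorithm~\ref{alg:is_radical} (where $X$ is constructed via the RUR to contain a generic point in each primary component of $W$, and generic points avoid the singular locus of $W_{\rm red}$), the right-hand condition is precisely that $W$ fails to be reduced, which gives the claimed equivalence.

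The main point requiring care is the additivity step: one must confirm that the formula \eqref{eq:Segre_DimX_AffineForm} applied to the full zero-dimensional $X$ really decomposes as the sum of the contributions from the individual points. This follows because the projective degree $\pdg_0$ appearing in \eqref{eq:Segre_DimX_AffineForm}, when $X$ is a disjoint union of reduced points, counts (a generic fiber) contributions from each point independently, and $\deg_\nu(\hat Y)\, d^{\nu}$ distributes correctly across the $m$ points since $\deg(X)=m$ enters linearly. All remaining steps are immediate consequences of the multiplicity formula \eqref{eq:MultInComp} and Proposition~\ref{prop:RadicalTestIrreducibleCase}, which have already been established.
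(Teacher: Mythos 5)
Your proof is correct and follows essentially the same route as the paper's: write $\{s(X,W)\}_0$ as the sum of the Hilbert--Samuel multiplicities $e_{p_i}W$ over the reduced points of $X$ (the paper simply cites \cite[Example~4.3.4]{Fulton} for this additivity rather than arguing via the exceptional divisor) and then apply Samuel's criterion, Proposition~\ref{prop:RadicalTestIrreducibleCase}, pointwise to conclude that each summand exceeds $1$ exactly when the component containing $p_i$ is non-reduced. Your explicit observation that the ``$W$ not reduced $\Rightarrow$ strict inequality'' direction additionally requires $X$ to meet every primary component of $W$ identifies a hypothesis the paper's proof leaves implicit (it is supplied by the way $X$ is sampled in Algorithm~\ref{alg:is_radical}), so this is a point in your favor rather than a gap.
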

\begin{proof}
 By \cite[Example~4.3.4]{Fulton} we know that $\{s(X,W)\}_0$, the coefficient of the dimension $0$ part of the Segre class, is the sum of the multiplicities of $W$ along each point in $X$. For $W$ to be reduced we require that it is reduced at a generic point (i.e.,~one outside the singular locus of $W_{\rm red}$), in light of Proposition \ref{prop:RadicalTestIrreducibleCase} we see that in particular we need $e_pW=1$ for each $p\in X$ (since $\dim(X)=0$), hence taken together, for $W$ to be reduced we require the integer $\{s(X,W)\}_0$ to be equal the number of points in $X$. Conversely if $W$ is reduced and all points in $X$ are smooth in $W_{\rm red}$ (again via Proposition \ref{prop:RadicalTestIrreducibleCase}) $e_pW=1$, for all $p\in X$; it follows that in this case we necessarily have $\{s(X,W)\}_0=\deg(X)$.
\end{proof}

\begin{remark}[Using the RUR in Algorithm \ref{alg:is_radical}] To use the RUR of sampled points in Algorithm \ref{alg:is_radical} we make the following alterations:
\begin{itemize}
    \item $p$ in Line 6 and $q$ in Line 13 are now both zero-dimensional varieties equal to the union of some set of reduced points represented as a RUR;
    \item in Line 7 we use Theorem \ref{thm:Segre_0_comp_RUR} in conjunction with \eqref{eq:Segre_DimX_AffineForm} to obtain the Segre class and the criterion in Line 8 becomes $\{s(p,W(p))\}_0> \deg(p)$ by Proposition \ref{prop:SegreTest};
        \item in Line 15 we use Theorem \ref{thm:Segre_0_comp_RUR} in conjunction with \eqref{eq:Segre_DimX_AffineForm} to obtain the Segre class and the criterion in Line 16 becomes $\{s(q,W_\nu)\}_0 = \deg(q)$ by Proposition \ref{prop:SegreTest}.
\end{itemize}
The rest of the algorithm remains as presented in Algorithm \ref{alg:is_radical}.\label{remark:usingRUR}
\end{remark}

\paragraph{Acknowledgements}

The authors are grateful to Peter B\"urgisser for various discussions and suggestions on the problem of computing the radical.
ET is  partially supported by ANR JCJC GALOP (ANR-17-CE40-0009) and the PHC GRAPE.

\small
\bibliographystyle{abbrv}
\bibliography{segre}

\end{document}